\newcommand\ceil[1]{\lceil #1 \rceil}
\newcommand\pnull{p^{\textnormal{null}}}
\newcommand{\ep}[1]{\ifthenelse{\e\alphaual{#1}{1}}{\mathrm{e}}{\mathrm{e}^{#1}}}
\def\fdr{\textnormal{FDR}}
\def\fdp{\textnormal{FDP}}
\newcommand{\citet}[1]{\cite{#1}}
\newcommand{\citep}[1]{\cite{#1}}
\title{The FDR-Linking Theorem}
\author{Weijie J.~Su\thanks{Department of Statistics, the University of Pennsylvania. Email: \texttt{suw@wharton.upenn.edu}. Homepage: {\color{black}\url{http://stat.wharton.upenn.edu/\~suw/}.}}}
\date{}
\begin{document}
\maketitle

{\centering
\vspace*{-0.8cm}
\par\bigskip
\date\today\par
}

\begin{abstract}

This paper introduces the \texttt{FDR-linking} theorem, a novel technique for understanding \textit{non-asymptotic} FDR control of the Benjamini--Hochberg (BH) procedure under arbitrary dependence of the $p$-values. This theorem offers a principled and flexible approach to linking all $p$-values and the null $p$-values from the FDR control perspective, suggesting a profound implication that, to a large extent, the FDR of the BH procedure relies mostly on the null $p$-values. To illustrate the use of this theorem, we propose a new type of dependence only concerning the null $p$-values, which, while strictly \textit{relaxing} the state-of-the-art PRDS dependence (Benjamini and Yekutieli, 2001), ensures the FDR of the BH procedure below a level that is independent of the number of hypotheses. This level is, furthermore, shown to be optimal under this new dependence structure. Next, we present a concept referred to as \textit{FDR consistency} that is weaker but more amenable than FDR control, and the \texttt{FDR-linking} theorem shows that FDR
consistency is completely determined by the joint distribution of the null $p$-values, thereby reducing the analysis of this new concept to the global null case. Finally, this theorem is used to obtain a sharp FDR bound under arbitrary dependence, which improves the $\log$-correction FDR bound (Benjamini and Yekutieli, 2001) in certain regimes.

\end{abstract}

{\bf Keywords:} False discovery rate (FDR), Benjamini--Hochberg procedure, \texttt{FDR-linking} theorem, positive regression dependence within nulls (PRDN), FDR consistency, Simes method, compliance, Bonferroni-masked adversary, 

\section{Introduction}
\label{sec:introduction}

In 1995, Benjamini and Hochberg introduced a new procedure, henceforth referred to as the BH procedure, to control a type I error called the false discovery rate (FDR) in large-scale multiple testing problems \cite{BenjaminiH95}. To describe their procedure, order the $n$ observed $p$-values $p_1, \ldots, p_n$ from the most significant to the least as $p_{(1)} \le \cdots \le p_{(n)}$. Taking $0 < \alpha < 1$ as the nominal level throughout the paper, the BH procedure finds $R$ that is the last time $p_{(j)}$ is below the critical value $\alpha j/n$, that is,
\[
R = \max\left\{1 \le j \le n: p_{(j)} \le \alpha j/n \right\},
\]
with the convention $\max \emptyset = 0$. The BH procedure rejects the $R$ hypotheses with their corresponding $p$-values $p_j \le \alpha R/n$. Letting $V$ denote the number of falsely rejected null $p$-values\footnote{A $p$-value is null if its null hypothesis is true, in which case the $p$-value is stochastically larger than or equal to the uniform variable on $(0, 1)$.}, \cite{BenjaminiH95} proves that their procedure controls the FDR in the sense that
\[
\fdr := \E \left[ \frac{V}{\max\{R, 1\}} \right] \le \alpha
\]
under independence of the $p$-values\footnote{Formally, \cite{BenjaminiH95} requires that (1) the null $p$-values are jointly independent (2) and, furthermore, are independent of the non-null $p$-values. The bound can be strengthened to $\fdr \le \pi_0 \alpha$ \cite{BenjaminiH95}.}. 

After two decades of vigorous development, today the BH procedure and many of its variants have been extensively applied in high-throughput sciences such as genomics, where the FDR is increasingly recognized as the right error rate to control when simultaneously testing tens of thousands of, for example, gene expression levels (see \cite{tusher2001significance,genovese2002thresholding,storey2003statistical} for exemplary applications and \cite{benjamini2010discovering} for a review of the FDR concept). 

In stark contrast to the enormous popularity of the FDR criterion and methodologies, however, it is generally \textit{unclear} whether the BH procedure controls the FDR given a joint distribution of the $p$-values, though there are continued efforts to bridge this unsettling gap. To put it into perspective, the BH procedure has been proved to control the FDR under independence \cite{BenjaminiH95} or certain positive dependence \cite{prds} of the $p$-values. Surprisingly, \textit{no} dependence structure is known to yield \textit{meaningful} (non-asymptotic) FDR control while also being conceptually different from the two rather stringent dependence structures. That said, there is a long line of work concerning asymptotic FDR control of the BH
procedure under various
dependence assumptions (see Section~\ref{sec:related-work} for literature review).

This paper considers \textit{non-asymptotic} FDR control of the BH procedure under dependence of the $p$-values. Roughly speaking, our approach begins by decomposing the full dependence structure into the \textit{null dependence} (joint distribution of the null $p$-values) and the \textit{null-non-null dependence} (conditional distribution of the non-null $p$-values given the null $p$-values). One might imagine that each of the two distributions would have an arbitrarily large effect on the FDR. This is, surprisingly, \textit{not true}. As shown next, our main finding is that the FDR of the BH procedure admits an upper bound that is determined \textit{only} by the null dependence. 

To state our main theorem, let $\pi_0 := n_0/n$ denote the true null proportion, where $n_0$ is the number of null $p$-values. For any $0 < x < 1$, write $\fdr_0(x)$ for the false discovery rate of the BH procedure at level $x$ supplied with the null $p$-values\footnote{To clear up any confusion, we remark that the BH procedure applied to the nulls uses the critical values $x j/n_0$ for $j = 1, \ldots, n_0$.}. As such, the $\fdr_0(x)$ is entirely determined by the null dependence.
\begin{theorem}[The \texttt{FDR-linking} theorem]\label{thm:general}
Under arbitrary dependence of the $p$-values, the FDR of the BH procedure at level $\alpha$ satisfies
\begin{equation}\label{eq:fdr_link}
\fdr \le \pi_0\alpha + \pi_0 \alpha\int^{1}_{\pi_0 \alpha} \frac{\fdr_0(x)}{x^2} {\dx}x.
\end{equation}
\end{theorem}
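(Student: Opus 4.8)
The plan is to absorb the entire null--non-null dependence into a single pointwise inequality, so that what remains is a statement about the null $p$-values alone, and then to evaluate the resulting expectation by a layer-cake computation. I would first dispose of the trivial case $n_0 = 0$ (then $V \equiv 0$ and both sides vanish), and assume $n_0 \ge 1$ from now on.

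First I would recast the FDR in terms of the BH threshold. For $t \ge 0$ let $V(t)$ be the number of null $p$-values at most $t$ and $R(t)$ the number of all $p$-values at most $t$, and let $\hat t := \alpha R/n$ be the BH rejection threshold, with $\hat t := 0$ when $R = 0$. On $\{R \ge 1\}$ the step-up construction gives the elementary identities $R(\hat t) = R$ and $V = V(\hat t)$, whence $V/R = \frac{\alpha}{n}\,V(\hat t)/\hat t$; and on $\{R = 0\}$ one has $V = 0$. Therefore $\fdr = \frac{\alpha}{n}\,\E\!\left[ V(\hat t)/\hat t \right]$, reading the integrand as $0$ when $R = 0$.

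The crucial step, and the one I expect to be the main obstacle, is to bound $V(\hat t)/\hat t$ by a quantity that depends on the null $p$-values only. Let $\pnull_{(1)} \le \cdots \le \pnull_{(n_0)}$ be the ordered null $p$-values and set $S_0 := \min_{1 \le j \le n_0} n_0\pnull_{(j)}/j$, the Simes statistic of the nulls. If $v := V(\hat t) \ge 1$ then the $v$-th smallest null $p$-value obeys $\pnull_{(v)} \le \hat t$, while $\pnull_{(v)} \ge S_0\,v/n_0$ by the definition of $S_0$; combining gives $V(\hat t)/\hat t \le n_0/S_0$. Separately, $\hat t = \alpha R/n \ge \alpha V(\hat t)/n$ because $R \ge V(\hat t)$, so $V(\hat t)/\hat t \le n/\alpha$. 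Hence $\frac{\alpha}{n}\,V(\hat t)/\hat t \le \min\{\pi_0\alpha/S_0,\,1\}$, and since the right-hand side no longer involves the non-null $p$-values at all, $\fdr \le \E\!\left[ \min\{\pi_0\alpha/S_0,\,1\} \right]$.

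Finally I would evaluate this bound. Using $\min\{a,1\} = \int_0^1 \mathbf{1}(a>u)\,\mathrm{d}u$ with $a = \pi_0\alpha/S_0$ together with Tonelli gives $\E[\min\{\pi_0\alpha/S_0,1\}] = \int_0^1 \Pr(S_0 < \pi_0\alpha/u)\,\mathrm{d}u$, and the substitution $x = \pi_0\alpha/u$ turns this into $\pi_0\alpha\int_{\pi_0\alpha}^{\infty} x^{-2}\,\Pr(S_0 \le x)\,\mathrm{d}x$ (the monotone map $x \mapsto \Pr(S_0 \le x)$ has at most countably many jumps, so swapping strict for non-strict inequalities under the integral is harmless). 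Unwinding definitions, $\{R_0(x) \ge 1\} = \{S_0 \le x\}$ where $R_0(x)$ is the number of rejections of the null-only BH procedure at level $x$, and since every such rejection is false, $\fdr_0(x) = \Pr(R_0(x) \ge 1) = \Pr(S_0 \le x)$; moreover $\fdr_0(x) = 1$ for $x \ge 1$ because $S_0 \le \pnull_{(n_0)} \le 1$. Splitting the integral at $x = 1$ then yields exactly $\fdr \le \pi_0\alpha + \pi_0\alpha\int_{\pi_0\alpha}^1 \fdr_0(x)/x^2\,\mathrm{d}x$. I would emphasize that the auxiliary bound $V(\hat t)/\hat t \le n/\alpha$ is doing real work: it is precisely the truncation at $1$ inside the expectation that lifts the lower limit of the integral from $0$ to $\pi_0\alpha$, so pinning down both pointwise bounds — and recognizing the null Simes statistic, whose distribution function is $\fdr_0$, as the right null-only surrogate — is where the substance of the argument lies.
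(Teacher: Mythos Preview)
Your proof is correct and follows essentially the same route as the paper. Both arguments reduce to the pointwise bound $\fdp \le \min\{\pi_0\alpha/S_0,\,1\}$ where $S_0$ is the null Simes statistic, and then identify $\Pr(S_0 \le x) = \fdr_0(x)$; the only cosmetic difference is that the paper evaluates the resulting expectation by splitting on $\{S_0 \le \pi_0\alpha\}$ and using Riemann--Stieltjes integration by parts, whereas you use a layer-cake representation and a change of variables --- both computations land on the same integral.
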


This theorem makes a link from the nulls to the full set of the $p$-values from the FDR control standpoint, hence the name the \texttt{FDR-linking} theorem. As an attractive feature, Theorem~\ref{thm:general} holds \textit{unconditionally}, without requiring any distributional assumptions of the $p$-values. The FDR bound in the FDR-link inequality \eqref{eq:fdr_link} is a non-decreasing function of $\pi_0\alpha$ (see Proposition~\ref{prop:increase} in Section~\ref{sec:theor-refthm:r-refth}) and, therefore, a slightly weaker but simpler form of this result is
\[
\fdr \le \alpha +  \alpha\int^{1}_{ \alpha} \frac{\fdr_0(x)}{x^2} {\dx}x.
\]
As a useful fact, $\fdr_0(x)$ is just the type I error of the Simes method \cite{simes1986improved} at level $x$ on the nulls (see Lemma~\ref{lm:simes} in Section~\ref{sec:theor-refthm:r-refth}).

Importantly, Theorem~\ref{thm:general} implies that the null dependence is \textit{central} to FDR control of the BH procedure. To illustrate this point, recognize that while the FDR presumably depends on both the null dependence and the null-non-null dependence, the upper bound in \eqref{eq:fdr_link} is solely determined by the null dependence, no matter how \textit{adversarial/unfavorable} the dependence between the nulls and non-nulls is for FDR control. Put differently, the non-null $p$-values have a bounded effect on the FDR as opposed to the profound effect of the null $p$-values. Indeed, the nulls in the worst case ($\fdr_0(x) = 1$) would turn the FDR bound in \eqref{eq:fdr_link} into
\[
\pi_0\alpha + \pi_0 \alpha\int^{1}_{\pi_0 \alpha} \frac{1}{x^2} {\dx}x = 1,
\]
which is possibly the largest and least useful (see Section~\ref{sec:impr-log-corr} for the feasibility of $\fdr_0(x) = 1$).

The FDR bound in Theorem~\ref{thm:general} is \textit{optimal} for certain null dependence in the sense that \eqref{eq:fdr_link} can reduce to an equality (see Theorem~\ref{thm:optimal}). To obtain an upper bound on the FDR, therefore, the \texttt{FDR-linking} theorem suggests focusing on $\fdr_0(x)$ on the nulls, at least as a worthwhile starting
point. This flexibility is particularly useful in the case where the null dependence can be proven to yield a small value of $\fdr_0(x)$, which will be demonstrated through two applications of this theorem in Section~\ref{sec:whats-new} and Section~\ref{sec:appl-fdr-cons}, respectively.

For completeness, Theorem~\ref{thm:general} holds for all compliant procedures. In \cite{dwork2018differentially}, a multiple testing procedure is said to be compliant at level $\alpha$ if, denoting by $R$ the number of rejections, every rejected $p$-value $p_j$ satisfies
\[
p_j \le \frac{\alpha R}{n}.
\]
This condition is a special instance of the self-consistency condition first proposed in \cite{blanchard2008two}, where the threshold $\alpha R/n$ is replaced by a general function of $R$ and prior information about hypotheses. As is clear, compliant procedures include the step-down BH procedure, the generalized step-up-step-down procedures \cite{tamhane1998,sarkarstepwise} and, importantly, the (step-up, the one in Theorem~\ref{thm:general}) BH procedure, which is the most powerful in this family of procedures in the sense that the rejection set of any compliant procedure is a subset of that of the BH procedure. That said, although being the most powerful, the BH procedure in general does \textit{not} maximize the FDR among compliant procedures (see discussion in
Section~\ref{sec:prov-theor-refthm}). Interestingly, compliant procedures maintain the \texttt{FDR-linking} property as shown in Theorem~\ref{thm:general_compliance}, of which Theorem~\ref{thm:general} is a corollary.

\begin{theorem}[The \texttt{FDR-linking} theorem for compliant procedures]\label{thm:general_compliance}
Theorem~\ref{thm:general} holds for any compliant procedures.
\end{theorem}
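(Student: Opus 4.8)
The plan is to establish the inequality~\eqref{eq:fdr_link} directly for an arbitrary compliant procedure; since the (step-up) BH procedure is itself compliant, this contains Theorem~\ref{thm:general} as a special case. Fix a compliant procedure, let $R$ be its number of rejections and $V$ the number of falsely rejected null $p$-values, and set $\hat t := \alpha R/n$. By the compliance condition every rejected $p$-value is $\le \hat t$, so the falsely rejected null $p$-values form a subset of the null $p$-values that are $\le \hat t$; writing $N(t)$ for the number of null $p$-values that are $\le t$, this gives $V \le N(\hat t)$. On the event $\{R\ge 1\}$ one has $R = n\hat t/\alpha$, hence
\[
\fdp = \frac{V}{R} \le \frac{\alpha\,N(\hat t)}{n\,\hat t},
\]
while on $\{R=0\}$ one has $V=0$ and $\fdp=0$. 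Thus the displayed bound holds on the whole sample space, with the convention that its right-hand side equals $0$ when $\hat t=0$.

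The second step replaces $N(\hat t)$ by a quantity depending on the data only through the null $p$-values. Let $\pnull_{(1)}\le\cdots\le\pnull_{(n_0)}$ be the ordered null $p$-values and let $S:=\min_{1\le j\le n_0} n_0\,\pnull_{(j)}/j$ be their Simes statistic. The key elementary fact is $N(t)\le n_0 t/S$ for every $t>0$: if $N(t)=k\ge1$ then $\pnull_{(k)}\le t$, so $S\le n_0\pnull_{(k)}/k\le n_0 t/k$, i.e.\ $k\le n_0 t/S$; and the bound is trivial when $N(t)=0$. Substituting $t=\hat t$ into the previous display, the factor $\hat t$ cancels and we obtain, on $\{R\ge1\}$,
\[
\fdp \le \frac{\alpha}{n\,\hat t}\cdot\frac{n_0\,\hat t}{S} = \frac{n_0\alpha}{nS} = \frac{\pi_0\alpha}{S}.
\]
Combined with the trivial bound $\fdp\le1$ and with $\fdp=0$ on $\{R=0\}$, this yields the pointwise inequality $\fdp\le\min\{1,\ \pi_0\alpha/S\}$, whose right-hand side is a function of the null $p$-values alone. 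This is the crux of the argument and the step I expect to carry the weight: the threshold $\hat t$ of a compliant procedure is random and may depend on the non-null $p$-values in an arbitrarily unfavorable way, so what makes the bound survive is that the Simes statistic $S$ controls $N(t)/t$ \emph{simultaneously over all} thresholds $t$, not just at one fixed value.

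It remains to take expectations and evaluate $\E[\min\{1,\ \pi_0\alpha/S\}]$. Since $\fdr_0(x)=\Pr(S\le x)$ is precisely the size of the Simes test at level $x$ on the nulls (Lemma~\ref{lm:simes}) and $S\le\pnull_{(n_0)}\le1$, the layer-cake representation together with Fubini's theorem gives
\[
\E\Bigl[\min\Bigl\{1,\ \tfrac{\pi_0\alpha}{S}\Bigr\}\Bigr] = \int_0^1 \Pr\!\bigl(S < \pi_0\alpha/u\bigr)\,{\dx}u,
\]
and the substitution $x=\pi_0\alpha/u$ rewrites the right-hand side as $\pi_0\alpha\int_{\pi_0\alpha}^{\infty}\Pr(S<x)\,x^{-2}\,{\dx}x$, which splits as $\pi_0\alpha\int_{\pi_0\alpha}^{1}\fdr_0(x)\,x^{-2}\,{\dx}x + \pi_0\alpha$ since $\Pr(S<x)=1$ for $x>1$ and $\Pr(S<x)$ coincides with $\fdr_0(x)$ off the countable set of atoms of $S$. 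This is exactly the right-hand side of~\eqref{eq:fdr_link}, and the degenerate case $\pi_0=0$ (no nulls, $\fdr=0$) is immediate. Beyond this concluding bookkeeping — distinguishing $\{R=0\}$ from $\{R\ge1\}$, handling strict versus weak inequalities at the atoms of $S$, and recording that $S\le1$ so that $\Pr(S<x)=1$ for $x>1$ — I anticipate no further difficulty.
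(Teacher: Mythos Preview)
Your proof is correct and follows essentially the same route as the paper's: both establish the pointwise bound $\fdp \le \min\{1,\ \pi_0\alpha/S\}$ with $S$ the Simes statistic on the nulls, then integrate. The paper reaches that bound by noting $p^{\textnormal{null}}_{(V)}\le\alpha R/n$ (hence $R\ge\lceil n p^{\textnormal{null}}_{(V)}/\alpha\rceil$) and maximizing over the index, while you reach it via the uniform threshold inequality $N(t)\le n_0 t/S$; and the paper evaluates the expectation by Riemann--Stieltjes integration by parts rather than layer-cake plus substitution, but these are presentational differences only.
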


To avoid any confusion, the FDR in \eqref{eq:fdr_link} in the context of Theorem~\ref{thm:general_compliance} corresponds to the compliant procedure at level $\alpha$, while the $\fdr_0(x)$ is for the BH procedure supplied with the nulls.

\subsection{Application: FDR control under a new condition}
\label{sec:whats-new}

As noted earlier, the BH procedure ensures $\fdr \le \alpha$ if the null $p$-values exhibit certain positive dependence and, furthermore, are positively dependent on the non-null $p$-values\cite{prds}. This type of dependence of the $p$-values is referred to as the \textit{positive regression dependence on a subset} property, or PRDS for short. To formally define this property, let $\mathcal{N}_0$ denote the set of nulls and call $D \subset [0, 1]^n$ an increasing set if $x \in D$ and $x \preceq y \preceq 1$ (in the coordinate-wise sense) together imply $y \in D$.

\begin{definition}[PRDS\footnote{In \cite{prds}, the PRDS property is defined for conditioning indices from any subset of all hypotheses. But the subset is almost always the nulls in the use of this property. Recognizing this fact, we assume the null set $\N_0$ in the definition for simplicity.}, \cite{prds}]\label{def:prds}
A set of $p$-values $(p_1, \ldots, p_n)$ is said to satisfy the PRDS property, if for any increasing set $D \subset [0, 1]^n$ and each null index $i \in \mathcal{N}_0$, the probability $\P((p_1, \ldots, p_n) \in D | p_i \le t)$ is non-decreasing in $t \in (0, 1]$.
\end{definition}

In relating to the literature, this property is implied by a condition termed the multivariate total positivity of order 2 ($\text{MTP}_2$) \cite{karlin1980classes}. The PRDS property is perhaps the most popular condition used for proof of FDR control of BH-type procedures (see, for example, the use of PRDS in super-uniformity lemmas for proving FDR control \cite{blanchard2008two,barber2017p,ramdas2017unified}). Although having relaxed the independence assumption in \cite{BenjaminiH95}, the PRDS remains to assume certain positive dependence between the nulls and non-nulls. This feature is \textit{undesirable} because, while some information of the null distribution is often available in applications, we typically lack knowledge of how the non-nulls depend on the nulls. 

Going back to the \texttt{FDR-linking} theorem, however, we learn that the FDR of any compliant procedure is largely contingent upon the null dependence. In this spirit, we ask if the PRDS property can be replaced by a more general condition. To address this need, we propose the following definition for FDR control, where, for convenience, a set of $p$-values is said to satisfy the PRD property if all $p$-values are null and obey the PRDS property.
\begin{definition}[PRDN]\label{def:prdn}
A set of $p$-values is said to satisfy the positive regression dependence within nulls (PRDN) if the set of all null $p$-values satisfies the PRD property.
\end{definition}

Put differently, the PRDN property boils down to the monotone non-decreasing of $\P(p_{\N_0} \in D_{\N_0} | p_i \le t)$ as a function $t \in (0, 1]$ for any $i \in \N_0$ and any increasing subset $D_{\N_0} \subset [0, 1]^{|\N_0|}$. Implied by the PRDS property, this new property imposes distributional assumptions \textit{only} on the null $p$-values. As such, the null-non-null dependence can be arbitrary in the PRDN property.

For example, consider an $n$-dimensional Gaussian vector $X \sim \N(\mu, \Sigma)$ with known $\Sigma$ for testing $\mu_i = 0$ against $\mu_i > 0, i = 1, \ldots, n$. The one-sided $p$-values for this testing problem obey the PRDN property if $\Sigma_{ij} \ge 0$ for all $i, j \in \N_0 = \{1 \le l \le n: \mu_l = 0\}$. In contrast, the PRDS property additionally requires the nonnegativity of $\Sigma_{ij}$ for all $i \in \N_0$ and $j \notin \N_0$ \cite{prds}. See the Appendix for a proof of this fact and an PRDN example of two-sided normal $p$-values \cite{karlin1981total}.

As shown below, any compliant procedure approximately controls the FDR for $p$-values satisfying this new property.
\begin{theorem}\label{thm:robustfdr}
Assume that the $p$-values satisfy the PRDN property. Then, any compliant procedure at level $\alpha$ obeys
\begin{equation}\label{eq:fdr2}
\fdr \le \alpha + \alpha\log\frac1{\alpha}.
\end{equation}
In particular, this FDR bound applies to the BH procedure.
\end{theorem}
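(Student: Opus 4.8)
The plan is to derive Theorem~\ref{thm:robustfdr} as an essentially immediate consequence of the \texttt{FDR-linking} theorem for compliant procedures (Theorem~\ref{thm:general_compliance}), with the PRDN hypothesis entering only through a bound on $\fdr_0(x)$. First I would observe that when the BH procedure is supplied with the null $p$-values alone, \emph{every} input $p$-value is null, so the relevant true-null proportion equals $1$; and the PRDN property says precisely that this collection of null $p$-values obeys the PRD property, i.e.\ is PRDS. Hence the Benjamini--Yekutieli theorem \cite{prds} --- equivalently, the validity of the Simes method under PRDS together with the identification in Lemma~\ref{lm:simes} --- yields $\fdr_0(x) \le x$ for every $0 < x < 1$. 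This is the only place the dependence assumption is used.

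Next I would substitute this bound into the FDR-link inequality \eqref{eq:fdr_link}. Writing $c := \pi_0\alpha$, the integrand $\fdr_0(x)/x^2$ is dominated by $1/x$, so
\[
\fdr \;\le\; c + c\int_{c}^{1} \frac{\fdr_0(x)}{x^2}\,\mathrm{d}x \;\le\; c + c\int_{c}^{1} \frac{1}{x}\,\mathrm{d}x \;=\; c + c\log\frac{1}{c}.
\]
It then remains to remove the dependence on $\pi_0$. The function $h(t) := t + t\log(1/t)$ has derivative $h'(t) = -\log t > 0$ on $(0,1)$, so $h$ is increasing there; since $c = \pi_0\alpha \le \alpha < 1$, we get $\fdr \le h(c) \le h(\alpha) = \alpha + \alpha\log(1/\alpha)$, which is \eqref{eq:fdr2}. (This last monotonicity step is exactly Proposition~\ref{prop:increase}, so it may be cited rather than re-proved.) Finally, since the step-up BH procedure is itself compliant at level $\alpha$, the bound applies to it in particular, giving the last sentence of the statement.

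I do not expect a genuinely hard step here once Theorem~\ref{thm:general_compliance} is available: the argument is a one-line integral plus an elementary monotonicity check. The one point requiring care is the inequality $\fdr_0(x)\le x$ --- one must be precise that "the BH procedure supplied with the null $p$-values" has a global-null-like input, so that PRD of the nulls is the \emph{only} hypothesis needed and the classical PRDS result \cite{prds} applies directly, rather than needing any assumption on how the non-nulls depend on the nulls. That is precisely the separation of null dependence from null--non-null dependence that the \texttt{FDR-linking} theorem was designed to exploit.
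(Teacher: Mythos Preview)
Your proposal is correct and follows essentially the same route as the paper: use PRDN to conclude $\fdr_0(x)\le x$ via the PRDS/Simes result, plug this into the \texttt{FDR-linking} inequality for compliant procedures to obtain $\pi_0\alpha + \pi_0\alpha\log\frac{1}{\pi_0\alpha}$, and then invoke the monotonicity of $t+t\log(1/t)$ to pass from $\pi_0\alpha$ to $\alpha$. The only cosmetic difference is that the paper states the monotonicity step directly rather than citing Proposition~\ref{prop:increase}.
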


The proof of this theorem follows from a simple application of the \texttt{FDR-linking} theorem in conjunction with the fact $\fdr_0(x) \le x$ (Lemma~\ref{lm:simes} in Section~\ref{sec:main-results}) under the PRDN property. In fact, the proof shows that Theorem~\ref{thm:robustfdr} can be strengthened to
\[
\fdr \le \pi_0\alpha + \pi_0\alpha\log\frac1{\pi_0\alpha},
\]
which implies \eqref{eq:fdr2} since $\alpha + \alpha\log\frac1{\alpha}$ is increasing in $\alpha \in (0, 1]$.

Theorem~\ref{thm:robustfdr} is the first result that maintains the FDR of the (original) BH procedure at a \textit{meaningful} level under a condition that is \textit{strictly less} stringent than PRDS. By ``meaningful,'' we mean that the FDR bound \eqref{eq:fdr2} can be set to 10\%, say, by taking the nominal level $\alpha = 2.05\%$ regardless of $n$, as opposed to the $\log$-correction bound \cite{prds}, which becomes too large to be useful as $n$ increases (see Section~\ref{sec:impr-log-corr}). The FDR bound \eqref{eq:fdr2} and particularly the logarithmic factor $\log\frac1\alpha$ \textit{cannot} be improved under the PRDN property, as seen from the lower bound below.

\begin{theorem}\label{thm:optimal}
Fix any $\epsilon > 0$. If $\alpha$ is sufficiently small (depending on $\epsilon$) and $n$ is sufficiently large (depending on both $\epsilon$ and $\alpha$),  then the BH procedure applied to certain PRDN $p$-values satisfies
\begin{equation}\nonumber
\fdr >  (1 - \epsilon) \left( \alpha + \alpha\log\frac1{\alpha} \right).
\end{equation}

\end{theorem}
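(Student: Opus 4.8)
The plan is to exhibit an explicit family of PRDN $p$-values on which the BH procedure attains an FDR arbitrarily close to the bound $\pi_0\alpha+\pi_0\alpha\log\frac1{\pi_0\alpha}$ of Theorem~\ref{thm:robustfdr}, and then to send $\pi_0\uparrow 1$. Concretely, take $n_1:=\ceil{1/\alpha}$ non-null hypotheses and $n_0:=n-n_1$ null hypotheses, let the null $p$-values be i.i.d.\ uniform on $(0,1)$ (so they obey the PRD property, since independence implies PRDS), and let the non-null $p$-values be a deterministic function of the realized nulls, which is admissible under PRDN. Since the nulls are i.i.d.\ uniform, the Simes identity gives $\fdr_0(x)=x$ for all $x$ (Lemma~\ref{lm:simes}), so the bound of Theorem~\ref{thm:robustfdr} equals $\pi_0\alpha+\pi_0\alpha\log\frac1{\pi_0\alpha}$, which tends to $\alpha+\alpha\log\frac1\alpha$ as $n\to\infty$ because $\pi_0=n_0/n\to1$.

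For the adversary, write $M:=\min_i p_i^{\mathrm{null}}$ and $\tau:=\alpha(n_1+1)/n$. If $M>\tau$, put all $n_1$ non-null $p$-values at $1$. If $M\le\tau$, set $L:=\max\{0,\ceil{nM/\alpha}-1\}$ (so $0\le L\le n_1$ by the choice of $\tau$), put $L$ of the non-null $p$-values at $0$, and put the remaining $n_1-L$ of them at $1$. This is a Bonferroni-style masking: the $L$ zero-valued non-nulls (all below $\alpha/n$) pad the BH rejection count up to exactly $\ceil{nM/\alpha}$, which is just large enough to pull in the smallest null.

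To lower-bound the FDR I would condition on $M$. On $\{M\le\alpha/n\}$ we have $L=0$; the non-nulls all sit at $1$ and are never rejected (their $p$-value exceeds $\alpha\ge\alpha R/n$), so BH effectively runs on the nulls alone, every rejection is false, and $V/\max\{R,1\}=1$. On $\{\alpha/n<M\le\tau\}$, the $L$ zero-valued non-nulls are always rejected and the unit-valued ones never are, so exactly $L$ rejections are of non-nulls; since $\ceil{nM/\alpha}=L+1$ gives $M\le\alpha(L+1)/n$, BH rejects at least $L+1$ hypotheses, hence $V=R-L$ and
\[
\frac{V}{\max\{R,1\}}=1-\frac{L}{R}\ \ge\ 1-\frac{L}{L+1}=\frac1{\ceil{nM/\alpha}} .
\]
Note that any extra rejections BH makes only enlarge $V/R$, so $R$ need not be pinned down. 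Discarding the (nonnegative) contribution of $\{M>\tau\}$ and writing $f$ for the density of $M$, this gives
\[
\fdr\ \ge\ \P(M\le\alpha/n)\ +\ \int_{\alpha/n}^{\tau}\frac{f(u)}{\ceil{nu/\alpha}}\,\mathrm{d}u .
\]

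Finally I would evaluate the right-hand side as $n\to\infty$ with $\alpha$ fixed. Since $M$ is the minimum of $n_0$ i.i.d.\ uniforms and $n_0/n\to1$, the first term tends to $1-e^{-\alpha}\ge\alpha(1-\alpha/2)$; and, bounding $\ceil{nu/\alpha}\le nu/\alpha+1$ and substituting $u=w/n$, the integral tends to $\int_{\alpha}^{\alpha(n_1+1)}\frac{\alpha e^{-w}}{w+\alpha}\,\mathrm{d}w\ge\int_{\alpha}^{1}\frac{\alpha e^{-w}}{w+\alpha}\,\mathrm{d}w$ (using $\alpha(n_1+1)\ge1+\alpha$). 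Restricting the last integral to $w\in[C\alpha,\delta]$ and using $w+\alpha\le(1+1/C)w$ and $e^{-w}\ge e^{-\delta}$ there bounds it below by $\frac{e^{-\delta}}{1+1/C}\,\alpha\bigl(\log\frac1\alpha-\log\frac{C}{\delta}\bigr)$; choosing $C$ large and $\delta$ small, then $\alpha$ small, makes this at least $(1-\epsilon/2)\,\alpha\log\frac1\alpha$, and taking $n$ large enough makes the finite-$n$ quantities within a factor $1-\epsilon/2$ of their limits. Adding the two contributions yields $\fdr>(1-\epsilon)(\alpha+\alpha\log\frac1\alpha)$. The only genuinely delicate point is this last estimate: the weight $e^{-w}$ inherited from the law of $M$ erodes the constant in the logarithmic integral by an additive $O(1)$, which is harmless only because $\log\frac1\alpha$ dominates it as $\alpha\downarrow0$ — precisely why $\alpha$ must be taken small depending on $\epsilon$, and $n$ large depending on both. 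Everything else, namely the PRD verification and the structural claim that exactly $L$ non-nulls are rejected, is routine given the identity $V=R-L$, which is designed to sidestep any analysis of how far BH marches up the null order statistics.
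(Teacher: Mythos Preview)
Your construction is correct and follows the same basic template as the paper: take the null $p$-values i.i.d.\ uniform (hence PRD), let the non-nulls be an adversarial function of the nulls, and show that the resulting FDR is at least $(1-\epsilon)(\alpha+\alpha\log\frac1\alpha)$ for small $\alpha$ and large $n$. The verification that exactly $L$ non-nulls are rejected, that $R\ge L+1$, and that $V/R\ge 1/\lceil nM/\alpha\rceil$ is clean, and your asymptotic estimate of the integral is fine.

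The route differs from the paper's in one structural respect. The paper first proves a stronger statement (Theorem~\ref{thm:str}): it uses the \emph{fully informed} adversary that maximizes over all $j$, setting $j^\star=\argmax_j j/\lceil n p^{\textnormal{null}}_{(j)}/\alpha\rceil$ and placing $\lceil n p^{\textnormal{null}}_{(j^\star)}/\alpha\rceil-j^\star$ non-nulls at zero. Lemma~\ref{lm:rej_rule} then pins down the BH rejection set exactly and shows that the FDP attains the upper bound \eqref{eq:fdp_main0} with equality, so in the limit $\fdr\to D_\alpha=\E[\min\{\max_{j\ge1} j/\lceil(\xi_1+\cdots+\xi_j)/\alpha\rceil,1\}]$; Lemma~\ref{lm:c_alpha_b} then shows $D_\alpha>(1-\epsilon)(\alpha+\alpha\log\frac1\alpha)$. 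Your adversary instead uses only the $j=1$ term of this maximum (the smallest null $M$), which is exactly the heuristic the paper sketches informally around \eqref{eq:adver_informal}. What this buys you is a shorter, self-contained argument that avoids the $j^\star$ machinery, the event $\{\lceil n p^{\textnormal{null}}_{(j^\star)}/\alpha\rceil-j^\star>n_1\}$, and the weak-convergence step \eqref{eq:unif_rate}; what the paper's route buys is the sharper limiting constant $D_\alpha$ and a proof that the FDP bound \eqref{eq:fdp_main0} is actually attained by BH, not merely approached. For Theorem~\ref{thm:optimal} as stated, your simpler adversary is sufficient.
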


The lower bound on the FDR is attained in an example where the null $p$-values are \iid~uniform random variables on $(0, 1)$, with certain least favorable non-null $p$-values constructed from an adversarial standpoint. 

%%Moreover, the true null proportion $\pi_0$ tends to 1 in this least-favorable example for the BH procedure. 

For more elaboration on this application of the \texttt{FDR-linking} theorem, see Section~\ref{sec:main-results}. This section also proves the \texttt{FDR-linking} theorem.

\subsection{Application: FDR consistency}
\label{sec:appl-fdr-cons}

To further explore the use of the \texttt{FDR-linking} theorem, we introduce \textit{FDR consistency}, which is a relaxation of FDR control in a certain sense. This new concept is inspired in part by Theorem~\ref{thm:robustfdr}, from which it follows that the FDR of the BH procedure working on any PRDN $p$-values converges to 0 as the nominal level $\alpha \goto 0$ by recognizing the fact
\[
\lim_{\alpha \goto 0} ~\alpha + \alpha \log\frac1{\alpha} = 0.
\]
The convergence is \textit{uniform}, regardless of the specific distribution of the PRDN $p$-values and, in particular, the number of $p$-values.

To formulate the definition of FDR consistency, let $\bm{\mathcal{P}}$ denote a class of $p$-value dependence structures. For example, $\bm{\mathcal{P}}$ can be all PRDN distributions on $[0, 1]^n$ for $n = 2, 4, 6, \ldots.$. Next, for any $p$-value distribution element $P \in \bm{\mathcal{P}}$, let $\fdr(\alpha; P)$ denote the FDR of the BH procedure at level $\alpha$ supplied with $p$-values sampled from the distribution $P$.

\begin{definition}[FDR consistency]\label{def:consist}
A dependence class $\bm{\mathcal{P}}$ is said to be FDR-consistent if
\[
\limsup_{\alpha \goto 0}\sup_{P \in \bm{\mathcal{P}}} \fdr(\alpha; P) = 0.
\]

\end{definition}

Our discussion above immediately reveals that the following result is true.
\begin{corollary}\label{cor:one_prdn}
All probability distributions of $p$-values satisfying the PRDN property form an FDR-consistent dependence class.
\end{corollary}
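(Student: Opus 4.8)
The plan is to deduce the corollary directly from Theorem~\ref{thm:robustfdr}. The crucial observation is that the FDR bound $\alpha + \alpha\log\frac1{\alpha}$ supplied by that theorem is a \emph{universal} quantity: it depends neither on the number of hypotheses $n$ nor on the particular joint law of the $p$-values, but only on the nominal level $\alpha$. This is exactly the kind of uniformity that Definition~\ref{def:consist} demands, so the corollary should fall out with essentially no further work.

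Concretely, I would first fix an arbitrary $\alpha \in (0,1)$ and an arbitrary element $P \in \bm{\mathcal{P}}$, where here $\bm{\mathcal{P}}$ is the class of all PRDN distributions (ranging over all dimensions $n$). Since $P$ is PRDN, Theorem~\ref{thm:robustfdr} applies to the BH procedure run at level $\alpha$ on $p$-values drawn from $P$, giving $\fdr(\alpha;P) \le \alpha + \alpha\log\frac1{\alpha}$. Because the right-hand side does not involve $P$, taking the supremum over $P \in \bm{\mathcal{P}}$ preserves the bound: $\sup_{P \in \bm{\mathcal{P}}} \fdr(\alpha;P) \le \alpha + \alpha\log\frac1{\alpha}$. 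Then I would let $\alpha \to 0$; since $\alpha \to 0$ and $\alpha\log\frac1{\alpha} \to 0$, the right-hand side tends to $0$, and as the left-hand side is nonnegative we conclude $\limsup_{\alpha \to 0}\sup_{P \in \bm{\mathcal{P}}} \fdr(\alpha;P) = 0$, which is precisely FDR consistency in the sense of Definition~\ref{def:consist}.

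The only point meriting any care is this uniformity: one must explicitly note that the constant in Theorem~\ref{thm:robustfdr} is the same for every dimension and every PRDN law, so passing to the supremum over $\bm{\mathcal{P}}$ does not inflate the bound (this is in turn underwritten by the $n$-free fact $\fdr_0(x) \le x$ together with the unconditional \texttt{FDR-linking} inequality). Beyond that, everything reduces to the elementary limit $\lim_{\alpha \to 0}\bigl(\alpha + \alpha\log\frac1{\alpha}\bigr) = 0$, so there is no real obstacle; the content of the corollary is simply the recognition that Theorem~\ref{thm:robustfdr} already delivers a dimension- and distribution-free bound.
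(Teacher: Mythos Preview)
Your proposal is correct and matches the paper's approach exactly: the paper simply observes that Theorem~\ref{thm:robustfdr} gives the uniform bound $\fdr \le \alpha + \alpha\log\frac1\alpha$ and that $\lim_{\alpha\to 0}\bigl(\alpha + \alpha\log\frac1\alpha\bigr)=0$, from which the corollary is declared immediate. Your write-up is a slightly more explicit version of the same one-line argument.
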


In words, instead of specifying the exact FDR level, FDR consistency only requires the FDR to tend to zero at a rate uniformly over all distribution elements in the class. In particular, FDR control at the nominal level (or up to a multiplicative constant) implies FDR consistency. This seemingly weak concept is, however, by no means trivial as there exists certain $p$-value distribution that violates FDR consistency (see Section~\ref{sec:impr-log-corr}). To appreciate this new concept, recognize that to ensure the FDR below a target level over an FDR-consistent class $\bm{\mathcal{P}}$, we only need to pick a certain nominal level for the BH procedure regardless of the number of $p$-values, albeit perhaps small. As such, the BH procedure controls the FDR at a ``meaningful'' level uniformly over the dependence class $\bm{\mathcal{P}}$.

In light of the \texttt{FDR-linking} theorem, a natural question is whether FDR consistency of the null dependence implies any useful properties of FDR control on all $p$-values. Remarkably, this is true: the FDR consistency is entirely a property of the null $p$-values. Loosely speaking, if the null dependence structures induced by a class $\bm{\mathcal{P}}$ form an FDR-consistent class, then the original class $\bm{\mathcal{P}}$ is ensured to be FDR-consistent. 

Now we formally state this result. For any $p$-value distribution $P$ in $[0, 1]^n$ for some $n$, let $P_0$ be the induced probability distribution of the null components of $P$ in $[0, 1]^{n_0}$ for some $n_0 \le n$. Moving on to a dependence class $\bm{\bm{\mathcal{P}}}$, we define the induced class
\[
\bm{\mathcal{P}}_0 := \{P_0: P \in \bm{\mathcal{P}}\},
\]
that is, the class of all null dependence induced by $\bm{\mathcal{P}}$.

\begin{theorem}\label{thm:consist}
If $\bm{\mathcal{P}}_0$ is FDR-consistent, then $\bm{\mathcal{P}}$ must also be FDR-consistent.

\end{theorem}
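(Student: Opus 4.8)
The plan is to apply the \texttt{FDR-linking} theorem (Theorem~\ref{thm:general}) to each $P \in \bm{\mathcal{P}}$ and then bound the right-hand side of \eqref{eq:fdr_link} uniformly, using only the FDR consistency of the induced null class $\bm{\mathcal{P}}_0$. The point to exploit is that the two delicate regions of the integral $\int_{\pi_0\alpha}^{1}\fdr_0(x)/x^2\,\mathrm{d}x$ are harmless for complementary reasons: near $x=\pi_0\alpha$ the factor $1/x^2$ blows up but is exactly cancelled by the prefactor $\pi_0\alpha$, while near $x=1$ the integrand can be as large as $1$ but the prefactor $\pi_0\alpha$ drives the whole contribution to $0$ as $\alpha\to 0$.

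First I would fix notation. For $P\in\bm{\mathcal{P}}$ let $\pi_0=\pi_0(P)$ be its null proportion and $P_0\in\bm{\mathcal{P}}_0$ its induced null distribution; the quantity $\fdr_0(x)$ appearing in \eqref{eq:fdr_link} for this $P$ is exactly $\fdr(x;P_0)$, the FDR of the BH procedure run on $p$-values drawn from $P_0$ (same number $n_0$ of hypotheses, same critical values $xj/n_0$, per the footnote convention in Theorem~\ref{thm:general}). If $\pi_0=0$ then $V\equiv 0$ and $\fdr(\alpha;P)=0$, so I may assume $\pi_0>0$. Define $g(\alpha):=\sup_{Q\in\bm{\mathcal{P}}_0}\fdr(\alpha;Q)\in[0,1]$. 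The hypothesis that $\bm{\mathcal{P}}_0$ is FDR-consistent reads $\limsup_{\alpha\to 0}g(\alpha)=0$, and since $g\ge 0$ this forces $\lim_{\alpha\to 0}g(\alpha)=0$.

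Next, fix $\epsilon>0$ and choose $\delta\in(0,1)$ with $g(x)\le\epsilon/3$ for all $x\in(0,\delta]$. For any $\alpha\le\delta$ and any $P\in\bm{\mathcal{P}}$, we have $\pi_0\alpha\le\alpha\le\delta$, so I split
\[
\int_{\pi_0\alpha}^{1}\frac{\fdr_0(x)}{x^2}\,\mathrm{d}x=\int_{\pi_0\alpha}^{\delta}\frac{\fdr_0(x)}{x^2}\,\mathrm{d}x+\int_{\delta}^{1}\frac{\fdr_0(x)}{x^2}\,\mathrm{d}x.
\]
On $[\pi_0\alpha,\delta]$ the integrand is at most $g(x)/x^2\le(\epsilon/3)x^{-2}$, which integrates to at most $\epsilon/(3\pi_0\alpha)$, so after multiplying by $\pi_0\alpha$ this piece contributes at most $\epsilon/3$. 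On $[\delta,1]$ the integrand is at most $x^{-2}$, integrating to at most $1/\delta$, so after multiplying by $\pi_0\alpha\le\alpha$ this piece contributes at most $\alpha/\delta$. Together with the leading term $\pi_0\alpha\le\alpha$ in \eqref{eq:fdr_link}, I get $\fdr(\alpha;P)\le\alpha+\epsilon/3+\alpha/\delta$, uniformly over $P\in\bm{\mathcal{P}}$. Taking $\alpha\le\min\{\delta,\ \epsilon/3,\ \delta\epsilon/3\}$ makes the right-hand side at most $\epsilon$, hence $\sup_{P\in\bm{\mathcal{P}}}\fdr(\alpha;P)\le\epsilon$ for all such $\alpha$; letting $\epsilon\to 0$ yields $\limsup_{\alpha\to 0}\sup_{P\in\bm{\mathcal{P}}}\fdr(\alpha;P)=0$, which is the claim.

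I do not expect a genuine obstacle: the argument is bookkeeping around the \texttt{FDR-linking} bound. The two points that need care are that $g$ need not be monotone in $\alpha$ — so the choice of $\delta$ should be justified via $\lim_{\alpha\to 0}g(\alpha)=0$ rather than any monotonicity — and the identification of $\fdr_0(x)$ in \eqref{eq:fdr_link} with $\fdr(x;P_0)$ for the induced null distribution, which is what lets the uniform-over-$\bm{\mathcal{P}}_0$ hypothesis feed back into a uniform-over-$\bm{\mathcal{P}}$ conclusion.
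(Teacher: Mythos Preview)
Your proposal is correct and follows essentially the same approach as the paper: apply the \texttt{FDR-linking} theorem, split the integral at a threshold $\delta$ below which the null FDR is uniformly small, and bound the two pieces separately (the low piece via the small-FDR assumption, the high piece via the trivial bound $1$ and the prefactor $\alpha$). The paper works with the slightly weaker form $\fdr\le\alpha+\alpha\int_\alpha^1\fdr_0(x)/x^2\,\mathrm{d}x$ rather than the $\pi_0\alpha$ version you use, but the argument and its arithmetic are otherwise the same.
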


This theorem suggests that to prove FDR consistency, it is sufficient to only consider the null $p$-values. This profound implication significantly reduces the problem complexity. For comparison, FDR control in the strict sense does not enjoy such desired complexity reduction because, as noted in Theorem~\ref{thm:optimal}, a loss of a multiplicative factor is unavoidable in FDR control. Perhaps this is part of the reason why proving exact FDR control under general dependence is notoriously challenging. In light of the above, FDR consistency might serve as a target for future theoretical FDR research, at least being a reasonable starting point for proving exact FDR control.

In Section~\ref{sec:colt-style}, we prove Theorem~\ref{thm:consist} using the \texttt{FDR-linking} theorem and present several FDR-consistent dependence classes.

\subsection{Overview of other contributions}
\label{sec:peek-at-our}

In addition to the two aforementioned contributions, this paper applies the \texttt{FDR-linking} theorem to arbitrary dependence in Section~\ref{sec:impr-log-corr} and considers certain mildly adversarial non-null $p$-values for FDR control in Section~\ref{sec:bound-advers}. In short, Section~\ref{sec:impr-log-corr} uses the $\log$-correction bound under the global null \cite{prds} for $\fdr_0(x)$ in Theorem~\ref{thm:general_compliance}, which, together with the \texttt{FDR-linking} theorem, allows us to obtain state-of-the-art FDR guarantee under arbitrary dependence in certain regimes. In Section~\ref{sec:bound-advers}, we ask if the logarithmic factor in Theorem~\ref{thm:optimal} can be dropped by imposing some constraints on the null-non-null dependence and, interestingly, this is true if the non-null $p$-values are constructed without knowing the smallest null $p$-value.

\subsection{Related work}
\label{sec:related-work}

We give a brief review of the literature concerning FDR control of the BH procedure. In \cite{yekutieli2008false}, FDR control under non-PRDS dependence is examined through several examples in the non-asymptotic setting. In addition to introducing the PRDS property, \cite{prds} shows that the FDR is always bounded by $\alpha$ multiplied by a factor about $\log n$ under arbitrary dependence (the so-called $\log$-correction). This correction can be extended to incorporate prior knowledge such as weights of hypotheses via shape functions \cite{blanchard2008two,ramdas2017unified}. A dependence property called PRDSS is proposed in \cite{blanchard2008two}, which generalizes the PRDS property but is applied to a step-down procedure related to the Benjamini--Liu procedure~\cite{benjamini1999step}. In \cite{heesen2015}, FDR control is ensured under reverse martingale dependence, but this dependence is mainly motivated by the convenience of the martingale technique for proving FDR
control \cite{storey2004strong}. Recently, \cite{dwork2018differentially,katsevich2018towards} demonstrate that the FDR is maintained at a reasonable level as long as the nulls are independent.  

An integral part of the \texttt{FDR-linking} theorem is FDR control under the global null. This is equivalent to the validity of the Simes method. This method is known to control the type I error under many types of positive dependence structures \cite{hochberg1995extensions,samuel1996simes,sarkar1997simes,sarkar1998some}, for example, under the PRD property \cite{prds}. Convincing evidence from \cite{rodland2006simes,finner2017simes} suggests that the Simes method is likely to be approximately valid in a broader setting except for certain pathological dependence structures.

On a less related note, much more effort has been devoted to understanding asymptotic FDR control under dependence, using tools from empirical processes, factor models, and stochastic processes, among others \cite{yekutieli1999resampling,finner2001false,storey2004strong,genovese2004stochastic,finner2007dependency,farcomeni2007some,romano2008control,wu2008false,roquain2011exact,fan2012estimating,liu2017false}. Under (strong) dependence, the FDP is well-recognized to exhibit high variability \cite{owen2005variance,ferreira2006benjamini,efron2010correlated,schwartzman2011effect} and, accordingly, many methods have been developed to offer more reliable FDR control in the asymptotic setting \cite{efron2007correlation,sun2009large,friguet2009factor,bogdan2015slope,fan2017estimation,fan2018farmtest}.

%Test statistics \cite{clarke2009robustness,liu2014phase}
%%\cite{yekutieli1999resampling}
%%\cite{reiner2007fdr} for extensive simulation examples.

%%% Local Variables:
%%% mode: latex
%%% TeX-master: "paper"
%%% End:

\section{FDR Control Under PRDN}
\label{sec:main-results}

In this section, we detail the first application of the \texttt{FDR-linking} theorem introduced in Section~\ref{sec:whats-new}, proving Theorems~\ref{thm:robustfdr} and \ref{thm:optimal} in Sections~\ref{sec:theor-refthm:r-refth} and \ref{sec:prov-theor-refthm}, respectively. Notably, the treatment of Section~\ref{sec:theor-refthm:r-refth} is unusual in that we first prove the \texttt{FDR-linking} theorem and Theorem~\ref{thm:robustfdr} in fact follows as a corollary.

\subsection{Proving the \texttt{FDR-linking} theorem}
\label{sec:theor-refthm:r-refth}

We present an upper bound on the false discovery proportion (FDP)
\[
\frac{V}{\max\{R, 1\}}
\] 
for compliant procedures, including the BH procedure. Denote by $p_1^{\textnormal{null}}, \ldots, p_{n_0}^{\textnormal{null}}$ the null $p$-values and thus $V \le n_0$ records the number of rejections made from the $n_0$ null $p$-values\footnote{Throughout the paper, we assume the number $n_0$ of null hypotheses are nonzero. Otherwise, FDP is always 0 by definition.}. Let $p^{\textnormal{null}}_{(1)} \le p^{\textnormal{null}}_{(2)} \le \cdots \le p^{\textnormal{null}}_{(n_0)}$ be the ordered null $p$-values. As will be shown later, any compliant procedure must satisfy
\begin{equation}\label{eq:fdp_main0}
\fdp \le \min\left\{ \max_{1 \le j \le n_0} \frac{j}{\lceil n p^{\textnormal{null}}_{(j)}/\alpha \rceil}, 1 \right\},
\end{equation}
where the ceiling function $\ceil{x}$ maps $x$ to the least integer that is greater than or equal to $x$.

This bound on the FDP serves as the basis for proving Theorem~\ref{thm:general_compliance}. To verify this simple inequality, our approach is to take an adversarial viewpoint as this will shed light on the development of the remainder of the paper. Fixing the null $p$-values, we aim to maximize the FDP (consequently maximizing FDR) by seeking the least favorable scenario of the non-null $p$-values for FDR control. To this end, consider a thought experiment where an adversary knows which are the $n_0$ null $p$-values and can adversely alter the non-null $p$-values to maximize the FDP using a compliant procedure. By the definition of $V$, there must be one (falsely) rejected $p$-value that is no smaller than $p^{\textnormal{null}}_{(V)}$. This fact, together with the compliance property of the procedure, ensures that $p^{\textnormal{null}}_{(V)} \le \alpha R/n$ (recall that $R$ is the total number of rejections). Rearranging this inequality yields 
\[
R \ge \left\lceil n p^{\textnormal{null}}_{(V)}/\alpha \right\rceil.
\]
Therefore, if $V \ge 1$, we get 
\[
\fdp = \frac{V}{\max\{ R, 1 \}} = \frac{V}{R} \le \frac{V}{\lceil n p^{\textnormal{null}}_{(V)}/\alpha \rceil},
\]
from which \eqref{eq:fdp_main0} follows since $1 \le V \le n_0$ and FDP is always no greater than 1. As is clear, the maximum FDP can be achieved if the adversary sets $\lceil n p^{\textnormal{null}}_{(j^\star)}/\alpha \rceil - j^\star$ non-null $p$-values to 0 and the rest to 1 (suppose $n_1 = n-n_0$ is sufficiently large), where $j^\star$ is the index maximizing \eqref{eq:fdp_main0}. Note that \eqref{eq:fdp_main0} also holds for $V = 0$ because $\fdp = 0$ in this case. 

Next, we make a connection between \eqref{eq:fdp_main0} and the Simes method. Write
\begin{equation}\label{eq:alpha_bound_w}
\max_{1 \le j \le n_0} \frac{j}{\lceil n p^{\textnormal{null}}_{(j)}/\alpha \rceil} \le \max_{1 \le j \le n_0} \frac{\alpha j}{n p^{\textnormal{null}}_{(j)}} = \frac{\pi_0 \alpha}{\min_{1 \le j \le n_0} n_0 p^{\textnormal{null}}_{(j)}/j},
\end{equation}
where recall that $\pi_0 = n_0/n$. The random variable 
\begin{equation}\label{eq:simes_he}
\min_{1 \le j \le n_0} \frac{n_0 p^{\textnormal{null}}_{(j)}}{j}
\end{equation}
is precisely the Simes $p$-value restricted to the nulls. Let $F(x)$ denote its CDF. A closer look reveals that, for any $0 < x < 1$, $F(x)$ is precisely the false discovery rate of the BH procedure at level $x$ when restricted to the null $p$-values $p^{\textnormal{null}}_1, \ldots, p^{\textnormal{null}}_{n_0}$. In brief, when applied to the null $p$-values, the Simes method at level $x$ rejects the global null hypothesis if and only if the Simes $p$-value $\min_{1 \le j \le n_0} n_0 p^{\textnormal{null}}_{(j)}/j \le x$. In the case of rejection, it must hold that $p^{\textnormal{null}}_{(j)} \le x j/n_0$ for some $j$, meaning that the BH procedure applied to the nulls rejects at least one $p$-value, in which case the FDP is equal to 1. This relationship between the two methods ensures
\begin{equation}\label{eq:simes_cdf}
F(x) = \P\left( \min_{1 \le j \le n_0} n_0 p^{\textnormal{null}}_{(j)}/j \le x \right) = \fdr_0(x)
\end{equation}
for $0 < x < 1$. Taken together, the discussion above yields the following proof.

\begin{proof}[Proof of Theorem~\ref{thm:general_compliance}]
Making use of \eqref{eq:fdp_main0} and \eqref{eq:alpha_bound_w}, we get
\begin{equation}\label{eq:fdr_cdf}
\begin{aligned}
\fdr &\le \E \left[ \min\left\{ \frac{\pi_0 \alpha}{\min_{1 \le j \le n_0} n_0 p^{\textnormal{null}}_{(j)}/j}, 1 \right\} \right]\\
      &= \P\left(\min_{1 \le j \le n_0} n_0 p^{\textnormal{null}}_{(j)} /j \le \pi_0\alpha \right) + \E\left[\frac{\pi_0\alpha}{\min_{1 \le j \le n_0} n_0 p^{\textnormal{null}}_{(j)} /j};  \min_{1 \le j \le n_0} n_0 p^{\textnormal{null}}_{(j)} /j > \pi_0\alpha \right] \\
      &= F(\pi_0\alpha) + \int_{\pi_0\alpha}^1 \frac{\pi_0\alpha}{x} {\dx}F(x),
\end{aligned}
\end{equation}
where the integral is in the sense of the Riemann--Stieltjes integration by recognizing that the integrator $F$ is c\`adl\`ag. Using integration by parts, we proceed with the simplification of \eqref{eq:fdr_cdf} as
\[
\begin{aligned}
F(\pi_0\alpha) + \int_{\pi_0\alpha}^1 \frac{\pi_0\alpha}{x} {\dx}F(x) &= F(\pi_0\alpha) + \pi_0\alpha F(1) - F(\pi_0\alpha) + \pi_0\alpha\int^{1}_{\pi_0\alpha} \frac{F(x)}{x^2} {\dx}x\\
      &= \pi_0\alpha + \pi_0\alpha\int^{1}_{\pi_0\alpha} \frac{F(x)}{x^2} {\dx}x\\
      &= \pi_0\alpha + \pi_0\alpha\int^{1}_{\pi_0\alpha} \frac{\fdr_0(x)}{x^2} {\dx}x.
\end{aligned}
\]
This completes the proof for any compliance procedures, including the BH procedure.
\end{proof}

Next, we turn to Theorem~\ref{thm:robustfdr}. We have the following lemma concerning $p$-values that are PRDN distributed.
\begin{lemma}\label{lm:simes}
Assume that the $p$-values satisfy the PRDN property. Then, the Simes $p$-value \eqref{eq:simes_he} on the nulls is stochastically larger than or equal to the uniform random variable on $(0, 1)$.
\end{lemma}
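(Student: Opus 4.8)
The plan is to reduce the claim to the classical fact that the Simes test controls the type~I error under the PRD (equivalently PRDS among nulls) property. Concretely, I want to show that
\[
\P\!\left( \min_{1 \le j \le n_0} \frac{n_0\, p^{\textnormal{null}}_{(j)}}{j} \le x \right) \le x
\quad\text{for all } 0 < x < 1,
\]
which is exactly the statement that the Simes $p$-value restricted to the nulls is stochastically larger than or equal to $\mathrm{Unif}(0,1)$. The event $\{\min_j n_0 p^{\textnormal{null}}_{(j)}/j \le x\}$ is the rejection event of the Simes method at level $x$ applied to the $n_0$ null $p$-values, and by hypothesis these null $p$-values satisfy the PRD property (all null, obeying PRDS).

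First I would invoke the result of Benjamini and Yekutieli \cite{prds} (originally Sarkar \cite{sarkar1998some} / Benjamini--Yekutieli) that the Simes method at level $x$ controls the type~I error at level $x$ when the $p$-values are PRDS and are all (stochastically larger than or equal to) uniform. Since PRDN asserts precisely that the vector $p_{\N_0}$ of null $p$-values is PRDS, and each null $p$-value is by definition stochastically larger than or equal to $\mathrm{Unif}(0,1)$, the hypotheses of that theorem are met for the sub-vector of nulls; this gives the desired bound directly. Equivalently, one can note (as the excerpt already records in \eqref{eq:simes_cdf}) that $F(x) = \fdr_0(x)$, so the claim is the same as $\fdr_0(x) \le x$ under PRDN, i.e. FDR control of the BH procedure run on the nulls alone, which again is the Benjamini--Yekutieli PRDS guarantee specialized to the global-null configuration.

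If one prefers a self-contained argument rather than citing the Simes/PRDS result as a black box, I would proceed as follows. Write the rejection event as an increasing set: $\{\min_j n_0 p^{\textnormal{null}}_{(j)}/j \le x\}$ is \emph{not} increasing in the $p$-values, so instead I work with its complement or decompose it. The standard trick is to write, for the Simes statistic, $\P(\text{reject at level }x) = \sum_{i \in \N_0}\P\big(p_i \le x\cdot k/n_0,\ p_i = p^{\textnormal{null}}_{(k)} \text{ realizes the min for some }k\big)$, then condition on $\{p_i \le t\}$ and use that, under PRDS, $\P(p_{\N_0} \in D \mid p_i \le t)$ is nondecreasing in $t$ for every increasing $D$ — which lets one bound the conditional probability that the remaining nulls are large enough to keep $p_i$ as the order statistic realizing the min. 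Summing a telescoping/area argument over the thresholds $t = x k/n_0$ and using $\P(p_i \le t) \le t$ (super-uniformity of each null) yields the total bound $x$. This is exactly Benjamini--Yekutieli's proof of Simes validity under PRDS.

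The main obstacle — and the reason I would simply cite rather than reprove — is the combinatorial bookkeeping in that last step: correctly handling which order statistic attains the minimum of $n_0 p^{\textnormal{null}}_{(j)}/j$, dealing with ties, and verifying that the relevant conditioning events are genuinely of the increasing-set form required by the PRDS definition. None of this is conceptually hard, but it is the delicate part; everything else is immediate once one observes that PRDN restricts exactly to the PRDS hypothesis on the null coordinates and that each null $p$-value is (stochastically) super-uniform. So my proof would be two sentences: the null $p$-values are PRDS and super-uniform by the PRDN assumption, hence by \cite{prds} (validity of Simes under PRDS) their Simes $p$-value \eqref{eq:simes_he} is stochastically larger than or equal to $\mathrm{Unif}(0,1)$.
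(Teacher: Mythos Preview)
Your proposal is correct and matches the paper's own argument essentially verbatim: the paper simply observes that the desired inequality is $F(x)=\fdr_0(x)\le x$ via \eqref{eq:simes_cdf} and then invokes \cite{prds} for FDR control under PRDS on the nulls. Your two-sentence version is exactly this, and the extra sketch of the Benjamini--Yekutieli proof is unnecessary but harmless.
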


This lemma is equivalent to
\[
\P\left(\min_{1 \le j \le n_0} n_0 p^{\textnormal{null}}_{(j)}/j \le x \right) \le x
\]
for $0 < x < 1$. This is true by using \eqref{eq:simes_cdf} and the fact $\fdr_0(x) \le x$ ensured by \cite{prds}. As an aside, in the case of \iid~uniform null $p$-values, $\min_{1 \le j \le n_0} n_0 p^{\textnormal{null}}_{(j)}/j$ is \textit{exactly} uniformly distributed on $(0, 1)$~\cite{simes1986improved}.

Applying Theorem~\ref{thm:general_compliance}, any compliant procedure with PRDN $p$-values gives
\[
\begin{aligned}
\fdr &\le \pi_0\alpha + \pi_0 \alpha \int^1_{\pi_0 \alpha} \frac{x}{x^2} {\dx}x \\
&= \pi_0 \alpha + \pi_0 \alpha \log\frac1{\pi_0 \alpha}\\
&\le \alpha + \alpha \log\frac1{\alpha}.
\end{aligned}
\]
This proves Theorem \ref{thm:robustfdr}. It should be emphasized that the analysis and reasoning above apply to not only the BH procedure but also any compliant procedures. More broadly, if the BH procedure controls the FDR up to a constant $c$ on the nulls or, equivalently, $\fdr_0(x) \le cx$, then the \texttt{FDR-linking} theorem concludes that
\[
\fdr \le \pi_0\alpha + c \pi_0\alpha\log\frac1{\pi_0\alpha}
\] 
for compliant procedures, roughly paying a factor of $\log\frac{1}{\alpha}$ in FDR control due to the least favorable non-nulls.

As additional background, we informally explain why the logarithmic factor $\log\frac1\alpha$ appears in the FDR bound in Theorem~\ref{thm:robustfdr}, though a rigorous treatment will be given in Section~\ref{sec:prov-theor-refthm}. Let all null $p$-values be \iid~uniform variables on $(0,1)$ and all non-null $p$-values be 0 and, additionally, assume $n - n_0$ is sufficiently large. Consider a procedure that rejects $R - 1$ non-null $p$-values and the smallest null $p$-value $p^{\textnormal{null}}_{(1)}$, where $R = \ceil{n p^{\textnormal{null}}_{(1)}/\alpha}$. This procedure is compliant. To see how the term $\alpha\log(1/\alpha)$ appears, first note that
\[
\fdp = \frac1{\ceil{n p^{\textnormal{null}}_{(1)}/\alpha}} \ge \frac{10}{11} \cdot \frac{\alpha}{n p^{\textnormal{null}}_{(1)}}
\]
in the case of $n p^{\textnormal{null}}_{(1)}/\alpha \ge 10$. If $n_0 \approx n$ and $n$ is large, $n p^{\textnormal{null}}_{(1)}$ is approximately distributed as an exponential random variable with mean 1. For sufficiently small $\alpha$, the logarithmic term $\alpha\log(1/\alpha)$ appears in
\begin{equation}\label{eq:adver_informal}
\begin{aligned}
\E\left[ \frac{10}{11} \cdot \frac{\alpha}{n p^{\textnormal{null}}_{(1)}};  n p^{\textnormal{null}}_{(1)}/\alpha \ge 10 \right] &\approx \E\left[ \frac{10}{11} \cdot \frac{\alpha}{\mathrm{Exp}(1)};  \mathrm{Exp}(1) \ge 10\alpha \right]\\
& = \frac{10\alpha}{11} \int_{10\alpha}^{\infty}\frac1{x}\e^{-x} \, {\dx}x\\
& = \frac{10\alpha}{11} \int_{10\alpha}^{\log\frac{12}{11}}\frac1{x}\e^{-x} \, {\dx}x + \frac{10\alpha}{11} \int_{\log\frac{12}{11}}^{\infty}\frac1{x}\e^{-x} \, {\dx}x\\
& \ge \frac{10\alpha}{11} \int_{10\alpha}^{\log\frac{12}{11}}\frac1{x} \frac{11}{12} \, {\dx}x + O(\alpha)\\
& = \frac{5}{6} \cdot \alpha\log\frac1{\alpha} + O(\alpha).
\end{aligned}
\end{equation}
The factor $5/6$ above can be made arbitrarily close to 1. As a crucial fact, the analysis above rests on knowing the most significant null $p$-value $p^{\textnormal{null}}_{(1)}$ (see Section~\ref{sec:bound-advers}).

In passing, we discuss an important property of the FDR bound in Theorem~\ref{thm:general}. Intuitively, one would expect that the FDR bound given in Theorem~\ref{thm:general} increases with the nominal level $\alpha$. This is indeed true as we show next.
\begin{proposition}\label{prop:increase}
The function
\[
 t  +  t \int^{1}_{ t } \frac{\fdr_0(x)}{x^2} {\dx}x
\]
is non-decreasing in $0 <  t  < 1$.
\end{proposition}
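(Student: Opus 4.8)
The plan is to establish monotonicity by directly comparing $g(t_1)$ and $g(t_2)$ for $0 < t_1 < t_2 < 1$, where $g(t) := t + t\int_t^1 \fdr_0(x)/x^2\,\mathrm{d}x$. The only structural fact I will use is that $\fdr_0 = F$ is a cumulative distribution function — this is exactly \eqref{eq:simes_cdf}, identifying $\fdr_0$ with the CDF of the Simes $p$-value $S := \min_{1 \le j \le n_0} n_0 p^{\textnormal{null}}_{(j)}/j$ on the nulls — and is hence non-decreasing on $(0,1)$ and bounded above by $1$.

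First I would split the integral defining $g(t_1)$ as $\int_{t_1}^1 = \int_{t_1}^{t_2} + \int_{t_2}^1$, which gives
\[
g(t_2) - g(t_1) = (t_2 - t_1)\left( 1 + \int_{t_2}^1 \frac{\fdr_0(x)}{x^2}\,\mathrm{d}x \right) - t_1 \int_{t_1}^{t_2} \frac{\fdr_0(x)}{x^2}\,\mathrm{d}x .
\]
Next I would bound the last integral from above via $\fdr_0(x) \le \fdr_0(t_2)$ for $x \in [t_1, t_2]$, obtaining $t_1\int_{t_1}^{t_2}\fdr_0(x)/x^2\,\mathrm{d}x \le t_1\fdr_0(t_2)(1/t_1 - 1/t_2) = (t_2-t_1)\fdr_0(t_2)/t_2$, and bound the surviving integral from below via $\fdr_0(x) \ge \fdr_0(t_2)$ for $x \in [t_2, 1]$, obtaining $\int_{t_2}^1 \fdr_0(x)/x^2\,\mathrm{d}x \ge \fdr_0(t_2)(1/t_2 - 1)$. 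Plugging the first bound into the display and then using the second reduces the right-hand side to at least $(t_2 - t_1)\bigl(1 - \fdr_0(t_2)\bigr) \ge 0$, which is the claim.

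There is no serious obstacle here; the computation is routine once the comparisons are set up, and the single point with any content is recognizing that $\fdr_0$ is monotone (being a CDF) and using this monotonicity in \emph{both} directions — upward on $[t_1,t_2]$ and downward on $[t_2,1]$. As a conceptual check, the same conclusion is transparent from the probabilistic reading: by the integration-by-parts identity already carried out in the proof of Theorem~\ref{thm:general_compliance} (with $t$ in place of $\pi_0\alpha$), $g(t) = \E[\min\{1, t/S\}]$ with the convention $t/0 = +\infty$, and for each fixed value of $S \in [0,1]$ the map $t \mapsto \min\{1, t/S\}$ is plainly non-decreasing, a property inherited after taking the expectation. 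I would present the direct comparison as the formal proof and record this probabilistic remark at the end.
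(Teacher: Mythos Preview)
Your argument is correct and tracks the paper's proof almost line for line: both reduce to the inequality $\int_{t}^{1}\fdr_0(x)/x^{2}\,\mathrm{d}x \ge \fdr_0(t)(1/t-1)$, yielding the key lower bound $1-\fdr_0(t)\ge 0$. The only cosmetic difference is that the paper differentiates (computing the right derivative $\partial_+ f(t)=1-\fdr_0(t)$ after invoking continuity and the c\`adl\`ag property of $\fdr_0$), whereas you work with a finite difference $g(t_2)-g(t_1)$ and arrive at $(t_2-t_1)(1-\fdr_0(t_2))$; your version is marginally cleaner in that it sidesteps the right-derivative and c\`adl\`ag bookkeeping. Your closing probabilistic remark---that $g(t)=\E[\min\{1,t/S\}]$ is pointwise non-decreasing in $t$---is a genuine one-line alternative that the paper does not record.
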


\begin{proof}[Proof of Proposition~\ref{prop:increase}]
Write
\[
f( t ) =  t  +  t \int^{1}_{ t } \frac{\fdr_0(x)}{x^2} {\dx}x.
\]
This function is continuous. Thus, it suffices to show that the right derivative of $f$ exists and is nonnegative everywhere. Interpreted as the CDF of the Simes $p$-value \eqref{eq:simes_he}, $\fdr_0(x)$ must be c\`adl\`ag. The c\`adl\`ag condition allows us to evaluate
\[
\begin{aligned}
\partial_+ f( t ) &= 1  + \int_{ t }^1 \frac{\fdr_0(x)}{x^2} {\dx}x - \frac{\fdr_0( t )}{ t }\\
& \ge 1  + \int_{ t }^1 \frac{\fdr_0( t )}{x^2} {\dx}x - \frac{\fdr_0( t )}{ t }\\
&= 1  + \fdr_0( t ) \left(\frac1{ t } - 1 \right) - \frac{\fdr_0( t )}{ t }\\
&= 1  - \fdr_0( t )\\
& \ge 0.
\end{aligned}
\]
For completeness, note that the first equality rests on the fact that $\fdr_0(t+) = \fdr_0(t)$, which is, again, ensured by the c\`adl\`ag condition. Hence, the proof is complete.

\end{proof}

\subsection{Optimality}
\label{sec:prov-theor-refthm}

In this section, we state and prove Theorem~\ref{thm:str}, a stronger version of Theorem~\ref{thm:optimal}. In what follows, let $\{\xi_j\}_{j=1}^{\infty}$ be \iid~exponential random variables with mean 1.
\begin{theorem}[A stronger version of Theorem~\ref{thm:optimal}]\label{thm:str}
For any $0 < \alpha < 1$, denote by
\[
D_{\alpha} := \E \left[ \min \left\{ \max_{1 \le j < \infty} \frac{j}{\lceil (\xi_1 + \cdots + \xi_j)/\alpha \rceil}, 1 \right\} \right].
\]
For any $\nu > 0$, there exist certain PRDN $p$-values on which the BH procedure gives
\begin{equation}\nonumber
\fdr >  D_{\alpha} - \nu.
\end{equation}
\end{theorem}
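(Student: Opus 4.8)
The plan is to realize $D_\alpha$ as the large-$n$ limit of the \emph{achievable} worst-case FDP isolated in Section~\ref{sec:theor-refthm:r-refth}. Take the $n_0$ null $p$-values to be \iid~uniform on $(0,1)$: these satisfy the PRD property, so the whole vector is PRDN \emph{no matter} how the $n_1 = n - n_0$ non-nulls depend on them. The non-nulls will be set, as a measurable function of the realized null $p$-values, to $m$ copies of $0$ and $n_1 - m$ copies of $1$ --- the adversarial pattern discussed right after \eqref{eq:fdp_main0}. The first step is a combinatorial claim about the \emph{plain step-up BH procedure} run on this data: planting $m$ zeros forces $R = m + j^\star(m)$ with $j^\star(m) = \max\{j \le n_0 : \pnull_{(j)} \le \alpha(m+j)/n\}$, hence $V = j^\star(m)$ and $\fdp = j^\star(m)/(m + j^\star(m))$. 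Choosing, for a fixed index $j$, the planted count $m = \max\{\ceil{n\pnull_{(j)}/\alpha} - j,\, 0\}$ gives $j^\star(m) \ge j$, and since $t \mapsto t/(m+t)$ is increasing this yields $\fdp \ge \min\{ j/\ceil{n\pnull_{(j)}/\alpha},\, 1\}$. Letting the adversary optimize over a fixed range $1 \le j \le J$, we obtain, on the event that all these planted counts are at most $n_1$,
\[
\fdp \ \ge\ M_{n,J} \ :=\ \min\Bigl\{ \max_{1\le j\le J}\ \frac{j}{\ceil{n\pnull_{(j)}/\alpha}},\ 1 \Bigr\}.
\]

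The second step is a limit argument. Fix $J$ and let $n_1 \to \infty$ with $n_1 = o(n)$, say $n_1 = \ceil{\sqrt n}$, so that $n/n_0 \to 1$. By the R\'enyi representation of uniform order statistics, $(n\pnull_{(1)}, \ldots, n\pnull_{(J)})$ converges in distribution to $(\xi_1, \xi_1+\xi_2, \ldots, \xi_1+\cdots+\xi_J)$. The map $(x_1,\ldots,x_J)\mapsto \min\{\max_j j/\ceil{x_j/\alpha},\,1\}$ is bounded by $1$ and its discontinuities occur only where some $x_j$ is an integer multiple of $\alpha$, a null set under the absolutely continuous (Gamma) limit law; hence the portmanteau theorem gives $\E[M_{n,J}] \to D_{\alpha,J} := \E[\min\{\max_{1\le j\le J} j/\ceil{(\xi_1+\cdots+\xi_j)/\alpha},\, 1\}]$. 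Since $n\pnull_{(J)}$ is tight while the available room $\alpha n_1 \to \infty$, the probability that some planted count $\ceil{n\pnull_{(j)}/\alpha} - j$ with $j \le J$ exceeds $n_1$ tends to $0$; write $G_n$ for the complementary (feasible) event, so $\P(G_n)\to 1$ and $\fdp \ge M_{n,J}$ on $G_n$, whence $\fdr \ge \E[M_{n,J}\mathbf{1}_{G_n}] \ge \E[M_{n,J}] - \P(G_n^c) \to D_{\alpha,J}$. Finally, truncating the supremum at $J$ only decreases it, so $D_{\alpha,J}\uparrow D_\alpha$ as $J\to\infty$ by monotone convergence (the limiting supremum is finite a.s.\ by the strong law, so $D_\alpha \le 1$ is well defined). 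Given $\nu > 0$, pick $J$ with $D_{\alpha,J} > D_\alpha - \nu/2$ and then $n$ (hence $n_0,n_1$) large enough that $\E[M_{n,J}] - \P(G_n^c) > D_{\alpha,J} - \nu/2$; the resulting PRDN construction then achieves $\fdr > D_\alpha - \nu$.

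I expect the main obstacle to be the first step: verifying that the \emph{original step-up BH procedure}, rather than some bespoke compliant procedure as in the discussion after \eqref{eq:fdp_main0}, actually attains the FDP bound once the zeros are planted. One has to check that $m$ planted zeros force $R \ge m$ and then $R \ge m + j$ for the chosen index (because $\ceil{n\pnull_{(j)}/\alpha} \ge n\pnull_{(j)}/\alpha$), that BH possibly rejecting \emph{beyond} rank $m+j$ only helps the adversary (as $V/R$ increases under $t \mapsto t/(m+t)$), that $V$ counts exactly the rejected nulls with no contribution from the planted $1$'s, and that the planted $0$'s and $1$'s a.s.\ create no ties with the uniform nulls. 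The remaining pieces --- weak convergence of order statistics, a portmanteau/bounded-convergence step, and a monotone limit in $J$ --- are routine. One then recovers Theorem~\ref{thm:optimal} from this via the elementary estimate $D_\alpha \ge (1-\epsilon)(\alpha + \alpha\log\frac1\alpha)$ valid for all sufficiently small $\alpha$, which is precisely the heuristic computation performed around \eqref{eq:adver_informal}.
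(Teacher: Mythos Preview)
Your proposal is correct and uses the same construction as the paper (i.i.d.\ uniform nulls, adversarial zero/one non-nulls, $n_1\to\infty$ with $n_1/n\to 0$), but your organization is slightly more streamlined in two respects. First, the paper proves via Lemma~\ref{lm:rej_rule} that BH achieves \emph{exactly} $\fdp=\min\{\max_{j\le n_0} j/\lceil n\pnull_{(j)}/\alpha\rceil,\,1\}$ at the optimal index $j^\star$, which requires a contradiction argument to rule out over-rejection; you instead plant $m=m(j_0)$ for a \emph{fixed} target index $j_0$, observe $j^\star(m)\ge j_0$, and use monotonicity of $t\mapsto t/(m+t)$ to get only the lower bound $\fdp\ge j_0/\lceil n\pnull_{(j_0)}/\alpha\rceil$, which is all that is needed. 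Second, the paper carries the full maximum over $j\le n_0$ to the limit and then invokes the strong law to control the tail $j>J$ on the finite-$n$ side, whereas you truncate at $J$ from the outset, pass to the limit for fixed $J$ by portmanteau, and recover $D_\alpha$ by monotone convergence $D_{\alpha,J}\uparrow D_\alpha$ on the limiting side. Both routes are valid; yours trades the exact-equality lemma and the SLLN tail estimate for two simpler monotonicity observations.
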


To be complete, the number $n$ of $p$-values in the example of this theorem should exceed some number depending on both $\alpha$ and $\nu$. In relating to Theorem~\ref{thm:robustfdr}, we immediately conclude that
\[
D_{\alpha} \le \alpha + \alpha \log\frac1\alpha
\]
for all $0 < \alpha < 1$. Alternatively, this inequality can be gleaned from the fact that
\[
\begin{aligned}
\E \left[ \min \left\{ \max_{1 \le j < \infty} \frac{j}{\lceil (\xi_1 + \cdots + \xi_j)/\alpha \rceil}, 1 \right\} \right] &\le \E \left[ \min \left\{ \max_{1 \le j < \infty} \frac{j}{(\xi_1 + \cdots + \xi_j)/\alpha}, 1 \right\} \right]\\
& = \E \left[ \min \left\{\frac{\alpha}{\min_{1 \le j < \infty} (\xi_1 + \cdots + \xi_j)/j }, 1 \right\} \right]\\
& = \alpha + \alpha\log\frac1\alpha.
\end{aligned}
\]
The last equality recognizes that the distribution of $\min_{1 \le j < \infty} (\xi_1 + \cdots + \xi_j)/j$ is uniform on $(0, 1)$.

The following lemma shows that for small $\alpha$, $D_{\alpha}$ is about as large as $\alpha + \alpha\log\frac1\alpha$. This explains why Theorem~\ref{thm:str} implies Theorem~\ref{thm:optimal} for sufficiently small $\alpha$.
\begin{lemma}\label{lm:c_alpha_b}
For any $\epsilon > 0$, if $\alpha$ is sufficiently small depending on $\epsilon$, then
\[
D_{\alpha} > (1 - \epsilon) \left( \alpha + \alpha\log\frac1{\alpha} \right).
\]
\end{lemma}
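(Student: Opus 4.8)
The plan is to lower-bound $D_{\alpha}$ by retaining only the $j = 1$ term in the inner maximum. Since $\lceil \xi_1/\alpha \rceil \ge 1$ always, the argument of the outer $\min$ is at most $1$, and we get
\[
D_{\alpha} \ge \E\left[ \min\left\{ \frac{1}{\lceil \xi_1/\alpha \rceil}, 1 \right\} \right] = \E\left[ \frac{1}{\lceil \xi_1/\alpha \rceil} \right].
\]
This reduces the lemma to a one-dimensional computation involving the single exponential variable $\xi_1$, sidestepping the full random-walk structure of $\xi_1 + \cdots + \xi_j$. It essentially re-runs the informal computation \eqref{eq:adver_informal}, but with the ad hoc constants $10$ and $\log\frac{12}{11}$ replaced by tunable parameters so that the leading constant can be pushed arbitrarily close to $1$.

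Concretely, introduce a large $M \ge 1$ and a small $c \in (0, 1)$, both to be chosen as functions of $\epsilon$ at the end. On the event $\{\xi_1 \ge M\alpha\}$ one has $\lceil \xi_1/\alpha \rceil \le \xi_1/\alpha + 1 \le (1 + 1/M)\xi_1/\alpha$, hence $1/\lceil \xi_1/\alpha \rceil \ge \tfrac{M}{M+1}\cdot\tfrac{\alpha}{\xi_1}$, which yields
\[
D_{\alpha} \ge \frac{M}{M+1}\,\alpha\,\E\left[ \frac{1}{\xi_1};\ \xi_1 \ge M\alpha \right] = \frac{M}{M+1}\,\alpha \int_{M\alpha}^{\infty} \frac{\e^{-x}}{x} {\dx}x.
\]
Discarding the tail beyond $c$ and using $\e^{-x} \ge \e^{-c}$ on $[M\alpha, c]$ gives, for every $\alpha < c/M$,
\[
D_{\alpha} \ge \frac{M}{M+1}\,\e^{-c}\,\alpha\left( \log\frac{1}{\alpha} - \log M + \log c \right).
\]

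To conclude, fix $\epsilon > 0$ (which may be assumed small, since the claim for small $\epsilon$ implies it for larger $\epsilon$). First choose $M$ large and $c$ small so that $\tfrac{M}{M+1}\e^{-c} > 1 - \epsilon/2$. With $M$ and $c$ now fixed, $-\log M + \log c$ is a constant, so for all sufficiently small $\alpha$ we have $\log\frac1\alpha - \log M + \log c \ge (1 - \epsilon/4)\log\frac1\alpha$, whence $D_{\alpha} \ge (1 - \epsilon/2)(1 - \epsilon/4)\,\alpha\log\frac1\alpha$. Finally, since $\alpha\log\frac1\alpha$ dominates $\alpha$ as $\alpha \goto 0$, this lower bound exceeds $(1 - \epsilon)\left(\alpha + \alpha\log\frac1\alpha\right)$ once $\alpha$ is small enough. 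I do not expect a genuine obstacle; the only point requiring care is the order of quantifiers, namely that $M$ and $c$ must be selected (depending on $\epsilon$) before $\alpha$ is sent to $0$, so that the constraint $\alpha < c/M$ and the absorption of the lower-order terms are automatic.
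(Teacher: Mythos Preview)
Your proposal is correct and follows essentially the same approach as the paper. The paper does not include a formal proof of Lemma~\ref{lm:c_alpha_b}; its only treatment is the informal computation around \eqref{eq:adver_informal}, which already isolates the $j=1$ term (via the smallest null $p$-value), controls the ceiling by restricting to $\xi_1/\alpha \ge 10$, truncates the exponential integral at $\log\tfrac{12}{11}$, and remarks that the resulting constant $5/6$ ``can be made arbitrarily close to $1$.'' Your parameters $M$ and $c$ are exactly the tunable versions of $10$ and $\log\tfrac{12}{11}$, and your final paragraph carries out the limiting argument the paper leaves to the reader. In short, you have written the rigorous version of the sketch the paper gives.
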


\begin{proof}[Proof of Theorem~\ref{thm:optimal}]
In Lemma~\ref{lm:c_alpha_b}, use $\epsilon/2$ in place of $\epsilon$. We have
\[
\fdr > D_{\alpha} - \nu > (1 - \epsilon/2) \left( \alpha + \alpha\log\frac1{\alpha} \right) - \nu
\]
if $\alpha$ is sufficiently small. Set $\nu$ in Theorem~\ref{thm:str} as $\nu = \frac{\epsilon}{2}\left( \alpha + \alpha\log\frac1{\alpha} \right)$. Thus, we get
\[
\fdr > (1 - \epsilon) \left( \alpha + \alpha\log\frac1{\alpha} \right),
\]
as desired.
\end{proof}

Next, we turn to the proof of Theorem~\ref{thm:str}. To construct certain PRDN $p$-values for Theorem~\ref{thm:str}, we start with the observation that the upper bound given in \eqref{eq:fdp_main0} is almost tight if the adversary is well-informed. More precisely, given that the adversary knows all null $p$-values, the adversary can find the index 
\begin{equation}\label{eq:j_st_def}
j^\star := \argmax_{1 \le j \le n_0} ~ \frac{j}{\ceil{n p^{\textnormal{null}}_{(j)}/\alpha}},
\end{equation}
where by convention $j^\star$ takes the largest value in the presence of ties. Having defined $j^\star$, we consider the following \textit{informed adversarial} non-null $p$-values: given $n_0$ null $p$-values, set
\[
\min\left\{ \left( \ceil{n p^{\textnormal{null}}_{(j^\star)}/\alpha} - j^\star \right)_+, ~ n_1 \right\}
\]
non-null $p$-values to 0 and the remaining non-null $p$-values to 1. Above, $x_+$ denotes $\max\{x, 0\}$. In the event that $\left( \ceil{n p^{\textnormal{null}}_{(j^\star)}/\alpha} - j^\star \right)_+ \le n_1$, we have the following fact concerning the BH procedure, where no distributional assumptions are made on the nulls.
\begin{lemma}\label{lm:rej_rule}
Supplied with the informed adversarial non-null $p$-values, the BH procedure at level $\alpha$ in the case where $\ceil{n p^{\textnormal{null}}_{(j^\star)}/\alpha} - j^\star \le n_1$ obeys
\begin{equation}\label{eq:1fdr_ceil1}
\fdp = \min\left\{ \max_{1 \le j \le n_0} \frac{j}{\lceil n p^{\textnormal{null}}_{(j)}/\alpha \rceil}, 1 \right\},
\end{equation}
thereby attaining the equality in \eqref{eq:fdp_main0} for the FDP.
\end{lemma}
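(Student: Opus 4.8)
The plan is to track precisely what the BH procedure does once the non-null $p$-values have been pinned to $0$ or $1$. First I would set $m_0 := ( \ceil{n p^{\textnormal{null}}_{(j^\star)}/\alpha} - j^\star )_+$; under the standing hypothesis $\ceil{n p^{\textnormal{null}}_{(j^\star)}/\alpha} - j^\star \le n_1$, this $m_0$ is exactly the number of non-null $p$-values set to $0$ (the remaining $n_1 - m_0$ being set to $1$). Assuming, as we may, that the null $p$-values are positive --- automatic when the lemma is applied to \iid~uniform nulls in Theorem~\ref{thm:str}, and needed only so that the maximands in \eqref{eq:fdp_main0} and \eqref{eq:1fdr_ceil1} are well defined --- the sorted list of all $n$ $p$-values has the $m_0$ zeros first, then $p^{\textnormal{null}}_{(k)}$ in position $m_0+k$ for $1 \le k \le n_0$, then ones. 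Because $\alpha i/n \le \alpha < 1$ for every $i \le n$, no $p$-value equal to $1$ is ever rejected, while the zeros always clear their critical values; hence the step-up cutoff has the form $R = m_0 + V$, where $V$ is the largest $k \le n_0$ with $p^{\textnormal{null}}_{(k)} \le \alpha(m_0+k)/n$ (and $V = 0$ if there is none). Since $V$ is exactly the number of falsely rejected nulls, $\fdp = V/(m_0+V)$.

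The degenerate case $m_0 = 0$ I would dispose of first: here $\ceil{n p^{\textnormal{null}}_{(j^\star)}/\alpha} \le j^\star$, so $p^{\textnormal{null}}_{(j^\star)} \le \alpha j^\star/n$ and thus $R \ge j^\star \ge 1$; since all non-nulls equal $1$, every rejection is a null and $\fdp = V/R = 1$, which matches the right-hand side of \eqref{eq:1fdr_ceil1} because its maximand is already $\ge j^\star / \ceil{n p^{\textnormal{null}}_{(j^\star)}/\alpha} \ge 1$.

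In the main case $m_0 \ge 1$ I would put $t^\star := \max_{1\le j\le n_0} j/\ceil{n p^{\textnormal{null}}_{(j)}/\alpha} = j^\star / \ceil{n p^{\textnormal{null}}_{(j^\star)}/\alpha}$, note that $t^\star < 1$ and $m_0 + j^\star = \ceil{n p^{\textnormal{null}}_{(j^\star)}/\alpha}$, and establish $V = j^\star$ via two one-sided bounds. For $V \ge j^\star$: the critical-value test at $k = j^\star$ reads $p^{\textnormal{null}}_{(j^\star)} \le (\alpha/n)\ceil{n p^{\textnormal{null}}_{(j^\star)}/\alpha}$, which holds since $\ceil{x}\ge x$; hence $R \ge m_0 + j^\star$, i.e.\ $V \ge j^\star$. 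For $V \le j^\star$: were $V \ge j^\star+1$, then $p^{\textnormal{null}}_{(V)} \le \alpha(m_0+V)/n$ would force $\ceil{n p^{\textnormal{null}}_{(V)}/\alpha} \le m_0+V$ (the right side being an integer), so $V/\ceil{n p^{\textnormal{null}}_{(V)}/\alpha} \ge V/(m_0+V) > j^\star/(m_0+j^\star) = t^\star$, using strict monotonicity of $x\mapsto x/(m_0+x)$ on $(0,\infty)$ together with $V > j^\star \ge 1$ --- contradicting the maximality of $t^\star$ over $1 \le j \le n_0$. Thus $V = j^\star$, and $\fdp = j^\star/(m_0+j^\star) = t^\star = \min\{t^\star,1\}$, which is exactly the right-hand side of \eqref{eq:1fdr_ceil1}.

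The step I expect to be the main obstacle is the bookkeeping in the first paragraph: faithfully reducing ``BH at level $\alpha$ run on the pooled $p$-value vector'' to the clean identity $R = m_0 + V$, i.e.\ verifying that the blocks of zeros and ones at the two ends behave as described and that the step-up threshold can be read off position by position. Once that is in place, keeping the two opposite bounds on $V$ apart is easy --- maximality of $t^\star$ gives $V \le j^\star$ and $\ceil{x}\ge x$ gives $V \ge j^\star$. Edge cases such as ties among the null $p$-values or null $p$-values equal to $1$ are a minor technical nuisance that leave the argument intact.
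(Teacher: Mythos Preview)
Your proposal is correct and follows essentially the same route as the paper: split into the case $m_0=0$ (where all rejections are null and $\fdp=1$) and the case $m_0\ge 1$, and in the latter prove $V=j^\star$ by showing $V\ge j^\star$ via $p^{\textnormal{null}}_{(j^\star)}\le (\alpha/n)\lceil n p^{\textnormal{null}}_{(j^\star)}/\alpha\rceil$ and $V\le j^\star$ by a contradiction with the maximality defining $j^\star$. The only cosmetic difference is that you first reduce the BH bookkeeping to the identity $R=m_0+V$, whereas the paper argues the two inequalities directly on the compliance condition; the substance is identical.
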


This lemma confirms that the informed adversary indeed yields (nearly) the least favorable $p$-value configuration for the BH procedure in terms of FDR control. For completeness, if $1 \le \ceil{n p^{\textnormal{null}}_{(j^\star)}/\alpha} - j^\star \le n_1$, the BH procedure exactly rejects the $j^\star$ smallest null $p$-values and the $\ceil{n p^{\textnormal{null}}_{(j^\star)}/\alpha} - j^\star$ non-null $p$-values that are set to 0. On the flip side, if $n p^{\textnormal{null}}_{(j^\star)}/\alpha \le j^\star$, the BH procedure rejects at least the $j^\star$ smallest null $p$-values (possibly rejects more than $j^\star$ null $p$-values) but none of the non-null $p$-values, in which case the FDP is 1. This is in agreement with \eqref{eq:1fdr_ceil1}. Next is the proof of this lemma.

\begin{proof}[Proof of Lemma~\ref{lm:rej_rule}]
In the case where $1 \le \ceil{n p^{\textnormal{null}}_{(j^\star)}/\alpha} - j^\star \le n_1$, observe that
\[
\frac{\alpha (j^\star + \ceil{n p^{\textnormal{null}}_{(j^\star)}/\alpha} - j^\star)}{n} \ge \frac{\alpha n p^{\textnormal{null}}_{(j^\star)}/\alpha}{n} = p^{\textnormal{null}}_{(j^\star)},
\]
which implies that the BH procedure rejects at least the $j^\star$ smallest null $p$-values and the $\ceil{n p^{\textnormal{null}}_{(j^\star)}/\alpha} - j^\star$ zero-valued non-null $p$-values. Now, by way of contradiction, suppose, in addition to the $\ceil{n p^{\textnormal{null}}_{(j^\star)}/\alpha} - j^\star$ non-null $p$-values, the BH procedure rejects $j+l$ null $p$-values for some $l \ge 1$. By the compliance property of the BH procedure, it must hold that
\[
p^{\textnormal{null}}_{(j^\star + l)} \le \frac{\alpha (\ceil{n p^{\textnormal{null}}_{(j^\star)}/\alpha} + l)}{n},
\]
which implies
\[
\ceil{n p^{\textnormal{null}}_{(j^\star + l)}/\alpha} \le \ceil{n p^{\textnormal{null}}_{(j^\star)}/\alpha} + l.
\]
As such, it concludes that
\[
\frac{j^\star + l}{\ceil{n p^{\textnormal{null}}_{(j^\star + l)}/\alpha}} \ge \frac{j^\star + l}{\ceil{n p^{\textnormal{null}}_{(j^\star)}/\alpha} + l} > \frac{j^\star}{\ceil{n p^{\textnormal{null}}_{(j^\star)}/\alpha}},
\]
where the second inequality rests on the assumption that $j^\star < \ceil{n p^{\textnormal{null}}_{(j^\star)}/\alpha}$. This contradicts the definition of $j^\star$ in \eqref{eq:j_st_def}. Therefore, the BH procedure must reject exactly $j^\star$ null $p$-values and $\ceil{n p^{\textnormal{null}}_{(j^\star)}/\alpha} - j^\star$ non-null $p$-values, thus yielding
\[
\fdp = \frac{j^\star}{j^\star + \ceil{n p^{\textnormal{null}}_{(j^\star)}/\alpha} - j^\star} = \max_{1 \le j \le n_0} \frac{j}{\lceil n p^{\textnormal{null}}_{(j)}/\alpha \rceil} < 1.
\]

Next, we turn to the case where $n p^{\textnormal{null}}_{(j^\star)}/\alpha \le j^\star$. In this case, note that $p^{\textnormal{null}}_{(j^\star)} \le \alpha j^\star/n$. Thus, the BH procedure rejects at least the $j^\star$ most significant null $p$-values, and none of the non-null $p$-values is rejected because non-nulls are all set to 1.
\end{proof}

Taken together, the discussion above readily gives the proof of Theorem~\ref{thm:str}.
\begin{proof}[Proof of Theorem~\ref{thm:str}]
Lemma~\ref{lm:rej_rule} reveals that
\[
\fdp \ge \min\left\{ \max_{1 \le j \le n_0} \frac{j}{\lceil n p^{\textnormal{null}}_{(j)}/\alpha \rceil}, 1 \right\} - \bm{1}(\ceil{n p^{\textnormal{null}}_{(j^\star)}/\alpha} - j^\star > n_1)
\]
for the BH procedure supplied with the informed adversarial $p$-values, where $\bm{1}(\cdot)$ is the indicator function. Thus, we get\footnote{The analysis shows that the BH is not the most adversarial in terms of maximizing the FDP given the null $p$-values.}
\begin{equation}\label{eq:fdr_lower_b}
\fdr \ge \E \left[ \min \left\{ \max_{1 \le j \le n_0} \frac{j}{\lceil n  p^{\textnormal{null}}_{(j)}/\alpha \rceil}, 1 \right\} \right] - \P\left( \ceil{n p^{\textnormal{null}}_{(j^\star)}/\alpha} - j^\star > n_1 \right).
\end{equation}

The next step is to find certain PRD null $p$-values such that the right-hand side of \eqref{eq:fdr_lower_b} tends to $D_{\alpha}$ as $n \goto \infty$. To this end, we consider the case where the $n_0$ null-$p$-values are \iid~uniform variables $U_1, \ldots, U_{n_0}$ on $(0, 1)$. The asymptotic regime considered below satisfies $n_0, n_1 \goto \infty$ and $n_1/n_0 \goto 0$. Under these assumptions, we can prove that
\begin{equation}\label{eq:unif_rate}
\E \left[ \min \left\{ \max_{1 \le j \le n_0} \frac{j}{\lceil n U_{(j)}/\alpha \rceil}, 1 \right\} \right] \goto \E \left[ \min \left\{ \max_{1 \le j < \infty} \frac{j}{\lceil (\xi_1 + \cdots + \xi_j)/\alpha \rceil}, 1 \right\} \right]
\end{equation}
(recall $\xi_1, \xi_2, \ldots$ are \iid~exponential random variables with mean 1) and 
\begin{equation}\label{eq:unif_prob}
\P\left( \ceil{n U_{(j^\star)}/\alpha} - j^\star > n_1 \right) \goto 0,
\end{equation}
where $j^\star$ is the same as in \eqref{eq:j_st_def} using $U_{(j)}$ in place of $p^{\textnormal{null}}_{(j)}$. As is self-evident, \eqref{eq:fdr_lower_b} together with \eqref{eq:unif_rate} and \eqref{eq:unif_prob} finishes the proof of Theorem~\ref{thm:str}.

To complete the last step of the proof, we briefly explain why \eqref{eq:unif_rate} and \eqref{eq:unif_prob} are true. For the first inequality, note that $n U_{(j)}$ converges weakly to $\xi_1 + \cdots + \xi_j$ as $n \goto \infty$ since $n_0/n \goto 1$. This weak convergence holds simultaneously for $1 \le j \le J$ if $J$ is fixed. To bridge the gap between taking maximum over $\{1, \ldots, J\}$ and $\{1, \infty\}$, use the strong law of large numbers to show that both
\[
\max_{J+1 \le j \le n_0} \frac{j}{\lceil n U_{(j)}/\alpha \rceil} \text{ and } \max_{J+1 \le j < \infty} \frac{j}{\lceil (\xi_1 + \cdots + \xi_j)/\alpha \rceil}
\]
converge to $\alpha$ in probability as both $J, n_0 \goto \infty$. For the second inequality, the index $j^\star$ is bounded in probability as $n \goto \infty$ and, hence, one can slowly increase $n_1$ so that the probability of $\ceil{n U_{(j^\star)}/\alpha} - j^\star > n_1$ tends to zero.

\end{proof}

In passing, we briefly remark that, supplied with the informed adversarial $p$-values, the BH procedure is \textit{not} the most anti-conservative compliant procedure in the sense of maximizing the FDP. To see this, define
\[
j^\diamond := \argmax_{1 \le j \le n_0} \left\{ \frac{j}{\lceil n p^{\textnormal{null}}_{(j)}/\alpha \rceil}:  \lceil n p^{\textnormal{null}}_{(j)}/\alpha \rceil - j \le n_1 \right\},
\]
with the convention that $\argmax \emptyset = 0$ in the case where $\lceil n p^{\textnormal{null}}_{(j)}/\alpha \rceil - j > n_1$ for all $1 \le j \le n_0$. With this definition in place, the most anti-conservative compliant procedure rejects the following $p$-values, depending on whether $j^\diamond \ge 1$ or not,
\[
\begin{cases}
p^{\textnormal{null}}_{(1)}, \ldots, p^{\textnormal{null}}_{(j^\diamond)}, \text{ and }\left( \ceil{n p^{\textnormal{null}}_{(j^\diamond)}/\alpha} - j^\diamond \right)_+ ~ \text{zero-valued non-nulls} & \text{if } j^\diamond \ge 1\\
\emptyset & \text{if } j^\diamond = 0.
\end{cases}
\]
On a related note, if $\ceil{n p^{\textnormal{null}}_{(j^\star)}/\alpha} - j^\star \le n_1$, this procedure rejects the same set of $p$-values as the BH procedure.

%%% Local Variables:
%%% mode: latex
%%% TeX-master: "paper"
%%% End:

\section{FDR Consistency}
\label{sec:colt-style}

In this section, we prove Theorem~\ref{thm:consist} and showcase a number of FDR-consistent dependence classes. From Definition~\ref{def:consist}, FDR consistency can be equivalently defined as follows: a dependence class $\bm{\mathcal{P}}$ is FDR-consistent if the BH procedure ensures that, for all $p$-value distribution element $P \in \bm{\mathcal{P}}$,
\[
\fdr(\alpha; P) \le Q(\alpha)
\]
holds for a function $Q(\alpha)$ obeying $Q(\alpha) \goto 0$ as $\alpha \goto 0$. With this equivalent definition in place, we turn to the proof of Theorem~\ref{thm:consist}.

\begin{proof}[Proof of Theorem~\ref{thm:consist}]

By assumption, we know that
\[
\fdr(\alpha; P_0) \le Q(\alpha)
\]
for any $P_0 \in \bm{\mathcal{P}}_0$ and a function $Q(\alpha) \goto 0$ as $\alpha \goto 0$. Without loss of generality, assume $Q(\alpha) \le 1$. From the \texttt{FDR-linking} theorem, it immediately follows that
\begin{equation}\label{eq:ala_xdads}
\fdr(\alpha; P) \le \alpha + \alpha\int^{1}_{\alpha} \frac{Q(x)}{x^2} {\dx}x.
\end{equation}
Above, $P_0$ is the null dependence by $P$. 

Leveraging the fact $Q(\alpha) \goto 0$ as $\alpha \goto 0$, for any small $\epsilon > 0$, we can find $\delta > 0$ such that $Q(\alpha) < \epsilon$ if $\alpha < \delta$. By means of \eqref{eq:ala_xdads}, for $\alpha < \delta$, we get
\[
\begin{aligned}
\fdr(\alpha; P) &\le \alpha + \alpha\int^{\delta}_{\alpha} \frac{Q(x)}{x^2} \dx x + \alpha\int_{\delta}^1 \frac{Q(x)}{x^2} {\dx}x \\
          & \le \alpha + \alpha\int^{\delta}_{\alpha} \frac{\epsilon}{x^2} \dx x + \alpha\int_{\delta}^1 \frac{1}{x^2} {\dx}x \\
          & = \epsilon + \frac{(1-\epsilon)\alpha}{\delta}.
\end{aligned}
\]
Taking any $\alpha < \frac{\epsilon\delta}{1-\epsilon}$, we get $\fdr(\alpha; P) \le 2\epsilon$, which holds for all $P \in \bm{\mathcal{P}}$. Since $\epsilon$ can be arbitrarily small, what we have established reveals that 
\[
\lim_{\alpha\goto 0}\sup_{P \in \bm{\mathcal{P}}}\fdr(\alpha; P)  = 0.
\]
This completes the proof.

\end{proof}

Next, we present several examples of FDR-consistent dependence classes. Using Theorem~\ref{thm:consist}, it suffices to examine whether FDR consistency holds under the induced null dependence. For comparison, it is unclear whether the BH procedure controls the FDR at the nominal level or not in these examples, whereas FDR consistency is a much more feasible concept to deal with. That being said, a worthwhile point to make is that FDR consistency is no trivial concept in the sense that, for example, there exists a joint distribution of the $p$-values under the global null such that the BH procedure leads to \cite{guo2008control}
\[
\fdr = \min\left\{\left( 1 + \frac12 + \ldots + \frac1n \right) \alpha, 1 \right\} \approx \min\{ \alpha \log n, 1 \}
\]
for each $n$, which forms a non-FDR-consistent dependence class because the FDR bound involves $\log n$.

\subsection{FDR-consistent examples}
\label{sec:exampl-fdr-cons}

%%%%%%%%%%%%%%%%%%%%%%%%%%

\begin{example}[Equicorrelated normal distribution] \label{ex:equi_one_two} Consider a $p$-value vector whose null components are calculated from a (possibly negatively) equicorrelated normal distribution. More precisely, suppose we observe $X^0 \sim \N(0, \Sigma^0)$, where $\Sigma^0_{ij} = \rho$ if $i \ne j \le n_0$, and otherwise $\Sigma^0_{ii} = 1$. The correlation coefficient $\rho$ obeys $-\frac1{n_0 - 1} \le \rho < 0$. The null $p$-values are those for testing $\E X^0_i = 0$ against the one-sided or two-sided alternatives, while the non-null $p$-values are arbitrary. Then, the dependence class $\bm{\mathcal{P}}$ comprised of all such $p$-value distributions is FDR-consistent.

\end{example}

\begin{proof}
By Theorem~\ref{thm:consist}, we only need to consider the global null case $n = n_0$. For simplicity, the proof concerns the case of one-sided $p$-values and the proof for the two-sided case is very similar. Furthermore, Corollary~\ref{cor:one_prdn} shows that it is sufficient to only focus on $-\frac1{n-1} \le \rho < 0$, the negative correlation regime. 

To begin with, note that FDR consistency of this example is equivalent to the following problem: for any $\epsilon > 0$, there exists $\delta > 0$ such that for any nominal level $\alpha < \delta$, the BH procedure gives $\fdr \le \epsilon$ under any dependence element in $\bm{\mathcal{P}}$. Let $X_1, \ldots, X_n$ be $n$ standard normal variables that are $\rho$-equicorrelated for some $-\frac1{n-1} \le \rho < 0$. Denote by
\[
Y_i = \sqrt{1 + \rho} X_i + \sqrt{-\rho} Z,
\]
where $Z$ is a standard normal variable independent of $X_1, \ldots, X_n$. As is clear, $Y_1, \ldots, Y_n$ are \iid~standard normal variables. Let $\mathcal{R}_{\alpha, X}$ denote the event that the Simes method at level $\alpha$ rejects the global null hypothesis on $X_1, \ldots, X_n$. Define $\mathcal{R}_{\alpha, Y}$ similarly for $Y_1, \ldots, Y_n$. By definition, in the event $\mathcal{R}_{\alpha, X}$ there must exist some $j \ge 1$ such that
\[
\min \left\{ X_{i_1}, \ldots, X_{i_j}  \right\} \ge \Phi^{-1}\left( 1 - \frac{\alpha j}{n} \right)
\]
for $j$ indices $i_1, \ldots, i_j$ from $\{1, 2, \ldots, n\}$. Let $\mathcal{A}$ denote the event that $Z \ge -\sqrt[4]{n-1}$. Assuming $\alpha < \frac12$, we have
\[
\begin{aligned}
\min \left\{ Y_{i_1}, \ldots, Y_{i_j} \right\} &\ge \sqrt{1 + \rho}\Phi^{-1}\left( 1 - \frac{\alpha j}{n} \right) - \sqrt{-\rho} \sqrt[4]{n-1}\\
&\ge \sqrt{1 - 1/(n-1)}\Phi^{-1}\left( 1 - \frac{\alpha j}{n} \right) - \sqrt{1/(n-1)} \sqrt[4]{n-1}\\
&\ge \frac{n-2}{n-1}\Phi^{-1}\left( 1 - \frac{\alpha j}{n} \right) - (n-1)^{-\frac14}.
\end{aligned}
\]
Now, suppose we have
\begin{equation}\label{eq:phi_ine}
\frac{n-2}{n-1}\Phi^{-1}\left( 1 - \frac{\alpha j}{n} \right) - (n-1)^{-\frac14} \ge \Phi^{-1}\left( 1 - \frac{\alpha_{\epsilon} j}{n} \right)
\end{equation}
for some $\alpha_{\epsilon} \in (0, 1)$ that will be specified later. Then, we get
\[
\min\left\{Y_{i_1}, \ldots, Y_{i_j} \right\} \ge \Phi^{-1}\left( 1 - \frac{\alpha_{\epsilon} j}{n} \right)
\]
in the event $\mathcal{R}_{\alpha, X} \cap \mathcal{A}$. As such, $\mathcal{R}_{\alpha, X} \cap \mathcal{A} \subset \mathcal{R}_{\alpha_{\epsilon}, Y}$. Consequently, we have
\[
\begin{aligned}
\P(\mathcal{R}_{\alpha, X}) &= \P(\mathcal{R}_{\alpha, X} \cap \mathcal{A}) + \P(\mathcal{R}_{\alpha, X} \cap \mathcal{A}^c) \\
& \le \P(\mathcal{R}_{\alpha_{\epsilon}, Y}) + \P(\mathcal{B}^c)\\
& = \P(\mathcal{R}_{\alpha_{\epsilon}, Y}) + \Phi\left(-\sqrt[4]{n-1} \right).
\end{aligned}
\]
From \cite{prds}, it is known that $\P(\mathcal{R}_{\alpha_{\epsilon}, Y}) = \alpha_{\epsilon}$. Therefore, the FDR of the BH procedure applied to the $p$-values from $X_1, \ldots, X_n$ satisfies
\begin{equation}\label{eq:fdr_bund_tosee}
\fdr = \P(\mathcal{R}_{\alpha, X}) \le \alpha_{\epsilon} + \Phi\left(-\sqrt[4]{n-1} \right).
\end{equation}

To continue the proof, we need the following lemma.
\begin{lemma}\label{lm:complex}
For any $\epsilon > 0$, there exists a sufficiently large integer $N$ such that
\begin{equation}\label{eq:phi_ine2}
\frac{n-2}{n-1}\Phi^{-1}\left( 1 - \frac{\alpha j}{n} \right) - (n-1)^{-\frac14} \ge \Phi^{-1}\left( 1 - \frac{\epsilon j}{2n} \right)
\end{equation}
for all $n \ge N$ and all $1 \le j \le n$ if $\alpha < \epsilon/4$.

\end{lemma}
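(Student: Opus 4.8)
The plan is to strip off the factor $\frac{n-2}{n-1}$ and the subtracted term $(n-1)^{-1/4}$ by establishing a lower bound on the gap between the two inverse-normal quantiles that uses only elementary Gaussian tail (Mills-ratio) estimates, and then to close the argument with the crude fact that $\sqrt{\log n}$ shrinks far more slowly than $n^{-1/4}$.

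First I would remove the dependence on $\alpha$. Since $\alpha<\epsilon/4$ forces $\alpha j/n<\epsilon j/(4n)$ and $\Phi^{-1}$ is increasing, we have $\Phi^{-1}(1-\alpha j/n)\ge\Phi^{-1}(1-\epsilon j/(4n))$, so it suffices to prove \eqref{eq:phi_ine2} with $\alpha$ replaced by the boundary value $\epsilon/4$; that is, to show
\[
\frac{n-2}{n-1}\,\Phi^{-1}\left(1-\frac{\epsilon j}{4n}\right)-(n-1)^{-1/4}\;\ge\;\Phi^{-1}\left(1-\frac{\epsilon j}{2n}\right)
\]
for all $1\le j\le n$ and all $n$ larger than some $N(\epsilon)$. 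I would also take $\epsilon<1$ without loss (the regime relevant to FDR consistency), so that every argument of $\Phi^{-1}$ lies in $(\frac12,1)$; then the corresponding quantiles are nonnegative and the intervals arising below sit inside $[0,\infty)$, where $\phi$ is decreasing. Writing $z=\Phi^{-1}(1-u)$ and $w=\Phi^{-1}(1-2u)$ with $u=\epsilon j/(4n)\in[\epsilon/(4n),\,\epsilon/4]$, the displayed inequality is equivalent to $z-w\ge z/(n-1)+(n-1)^{-1/4}$.

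The substance consists of two estimates. For an upper bound on $z$: from $1-\Phi(z)\le\frac12 e^{-z^2/2}$ and $1-\Phi(z)=u\ge\epsilon/(4n)$ one gets $z\le L_n:=\sqrt{2\log(4n/\epsilon)}$. For a lower bound on $z-w$: start from $1-\Phi(w)=2(1-\Phi(z))$, equivalently $\int_w^z\phi(t)\,{\dx}t=1-\Phi(z)$, and split at a fixed threshold $u_0$ chosen so that $u\le u_0$ forces $z\ge1$. When $u\ge u_0$, $z$ is bounded and $u\mapsto z-w$ is continuous and strictly positive on a compact interval, hence bounded below by a positive constant. When $u\le u_0$, the two-sided Mills bounds $\frac{z}{1+z^2}\phi(z)\le 1-\Phi(z)\le\frac{\phi(z)}{z}$ apply: comparing $\int_w^z\phi\ge(z-w)\phi(z)$ with the upper bound gives $z-w\le 1/z$, and comparing $\int_w^z\phi\le(z-w)\phi(w)$ with the lower bound gives $(z-w)\phi(w)\ge\frac{z}{1+z^2}\phi(z)$; since $\phi(w)/\phi(z)=e^{(z^2-w^2)/2}\le e^{(z-w)z}\le e$, this yields $z-w\ge\frac1e\cdot\frac{z}{1+z^2}\ge\frac{1}{2ez}$. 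In both cases one obtains $z-w\ge c_0/(1+z)$ for a constant $c_0=c_0(\epsilon)>0$.

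Finally, combining the two estimates, $z-w\ge c_0/(1+z)\ge c_0/(1+L_n)$, while the right-hand side of the reduced inequality is at most $L_n/(n-1)+(n-1)^{-1/4}$. Because $L_n=\Theta(\sqrt{\log n})$, the lower bound $c_0/(1+L_n)$ decays only like $1/\sqrt{\log n}$, which eventually exceeds $L_n/(n-1)+(n-1)^{-1/4}=O(n^{-1/4})$; taking $N=N(\epsilon)$ past this crossover completes the argument. I expect the main obstacle to be the lower bound $z-w\ge c_0/(1+z)$ in the regime $u\downarrow0$ (i.e.\ $j$ small and $n$ large): there $z\to\infty$ and $z-w$ genuinely shrinks, so one must check through the Mills-ratio bounds that it shrinks no faster than $1/z\sim 1/\sqrt{\log(1/u)}$. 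The other ingredients — the monotonicity reduction in $\alpha$, the tail bound on $z$, and the $\sqrt{\log n}$-versus-$n^{-1/4}$ comparison — are routine.
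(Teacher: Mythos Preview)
Your proposal is correct and follows essentially the same route as the paper's proof: reduce to $\alpha=\epsilon/4$ by monotonicity, bound the right-hand side $z/(n-1)+(n-1)^{-1/4}$ by $O(n^{-1/4})$ via the Gaussian tail estimate $z\le\sqrt{2\log(4n/\epsilon)}$, and show the quantile gap $z-w$ is at least of order $1/\sqrt{\log n}$, which dominates $n^{-1/4}$. Your Mills-ratio derivation of the lower bound $z-w\ge c_0/(1+z)$ is more explicit than the paper's one-line asymptotic estimate $z-w\gtrsim(\log 2)/\sqrt{2\log(4n/(\epsilon j))}$, but the two arguments are interchangeable.
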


By Lemma~\ref{lm:complex}, we can always find sufficiently large $N_1$, depending only on $\epsilon$, such that \eqref{eq:phi_ine} holds for $\alpha_{\epsilon} = \epsilon/2$ and $\alpha < \epsilon/4$. In this case, we have
\[
\P(\mathcal{R}_{\alpha, X}) \le \frac{\epsilon}{2} + \Phi\left(-\sqrt[4]{n-1} \right).
\]
Next, we choose $N_2$ such that $\Phi\left(-\sqrt[4]{n-1} \right) \le \frac{\epsilon}{2}$ if $n \ge N_2$. Note that $N_2$ depends only on $\epsilon$. Thus, if $n \ge \max\{N_1, N_2\}$, from \eqref{eq:fdr_bund_tosee} we get
\begin{equation}\label{eq:fdr_bund_cdsfsdfs}
\fdr = \P(\mathcal{R}_{\alpha, X}) \le \epsilon
\end{equation}
for any $\alpha < \epsilon/4$. Last, let $\delta_1$ be sufficiently small such that
\begin{equation}\label{eq:fdr_last_cont}
\sup_{\alpha < \delta_1}\fdr(\alpha) \le \epsilon
\end{equation}
for all $n < \max\{N_1, N_2\}$ and $-\frac1{n-1} \le \rho < 0$.

Taken together, \eqref{eq:fdr_bund_cdsfsdfs} and \eqref{eq:fdr_last_cont} show that $\fdr \le \epsilon$ for all $n$ and any $-\frac1{n-1} \le \rho < 0$ if $\alpha < \min \{\epsilon/4, \delta_1\}$. This completes the proof. The proof of Lemma~\ref{lm:complex} is deferred to the Appendix.

\end{proof}

\begin{example}[Arbitrary dependence with vanishing true null proportion] Consider a sequence of integer pairs $\{n^{(l)}, n_0^{(l)}\}_{l=1}^{\infty}$ such that\footnote{Below, the dependence on $l$ is often omitted for the sake of simplicity.} $n^{(l)} \ge n_0^{(l)}$ and
\begin{equation}\label{eq:not_crazy_l}
\sup_l \frac{n_0 \log n_0}{n} < \infty.
\end{equation}
Let $\bm{\mathcal{P}}$ consist of all $n^{(l)}$-dimensional $p$-value distributions with $n^{(l)}_0$ null components. Except for specified $n^{(l)}, n_0^{(l)}$, the null dependence and null-non-null dependence can be arbitrary. Then, $\bm{\mathcal{P}}$ is FDR-consistent.
\end{example}

\begin{proof}
We give a direct proof of the FDR consistency of $\bm{\mathcal{P}}$. By Theorem~\ref{thm:prds_better} in Section~\ref{sec:impr-log-corr}, it suffices to show that
\[
\inf_l \frac1{\pi_0 S(n_0)} > 0
\]
and
\[
\lim_{\alpha \goto 0} S(n_0) \pi_0 \alpha \log\frac{\e}{S(n_0) \pi_0 \alpha} = 0
\]
uniformly over $P \in \bm{\mathcal{P}}$. The first inequality is used to pick a sufficiently small $\alpha$ such that the FDR bound given in Theorem~\ref{thm:prds_better} is nontrivial for all $l$. To verify the two requirements, note that
\[
\frac1{\pi_0 S(n_0)} = (1 + o(1))\frac{n}{n_0 \log n_0} > c
\]
uniformly in $l$ for some $c > 0$, as implied by \eqref{eq:not_crazy_l}. Likewise, we have
\[
S(n_0) \pi_0 \alpha \log\frac{\e}{S(n_0) \pi_0 \alpha} \le \frac{\alpha}{c} \log \frac{\e c}{\alpha},
\]
which tends to zero uniformly as $\alpha \goto 0$.
\end{proof}

%%%%%%%%%% yet another example
\begin{example}[Two-sided-PRDN dependence] \label{ex:two_prdn_s}
Going back to the example of one-sided normal $p$-values given right below Definition~\ref{def:prdn}, observe that the PRDN property is not satisfied by the two-sided $p$-values, unless under certain additional assumptions \cite{karlin1981total}. In fact, as with the PRDS property, PRDN in generally does carry over from one-sided to two-sided $p$-values (see, for example, one-sided $p$-values for Studentized tests in \cite{prds}). Interestingly, these two-sided $p$-values maintain FDR consistency. To state precisely, let $p^{(\text{o})}_1, \ldots, p^{(\text{o})}_n$ be (one-sided) PRDN $p$-values and, in addition, the corresponding test statistics are distributed symmetrically about 0 under the null and are continuous, thereby yielding uniformly distributed null $p$-values. Define the induced two-sided $p$-values as
\[
p^{(\text{t})}_i =
\begin{cases}
2 p^{(\text{o})}_i, & \quad \text{if } p^{(\text{o})}_i  \le \frac12\\
2 (1 - p^{(\text{o})}_i), & \quad \text{if } p^{(\text{o})}_i  > \frac12.
\end{cases}
\]
In the case of normal distribution, the two-sided $p$-value $p^{(\text{t})}_i = 2\Phi(-|z|)$ is given as above by the one-sided $p^{(\text{o})}_i = \Phi(-z)$. Below, we show that all distributions of these two-sided $p$-values form an FDR-consistent dependence class.

\end{example}

\begin{proof}
As earlier, Theorem~\ref{thm:consist} allows us to only consider $p^{(\text{o})}_1, \ldots, p^{(\text{o})}_n$ that satisfy PRD under the global null.
To start with, recognize that the BH procedure at level $\alpha$ controls the FDR at $\alpha$ on these $p$-values. Due to symmetry, this also applies to $1 - p^{(\text{o})}_1, \ldots, 1 - p^{(\text{o})}_n$. Denote by $R$ the number of rejected two-sided $p$-values by the BH procedure. In the event that $R \ge 1$, it must be the case where at least $\ceil{R/2}$ of them are given as $2p^{(\text{o})}_i$ or the case where at least $\ceil{R/2}$ are given as $2(1 - p^{(\text{o})}_i)$. In the former case, at least $\ceil{R/2}$ from $p^{(\text{o})}_1, \ldots, p^{(\text{o})}_n$ are no greater than
\begin{equation}\label{eq:left_o_prds}
\frac 12 \cdot \frac{\alpha R}{n} \le \frac 12 \cdot \frac{2\alpha \ceil{R/2}}{n} = \frac{\alpha \ceil{R/2}}{n}.
\end{equation}
Otherwise, at least $\ceil{R/2}$ from $p^{(\text{o})}_1, \ldots, p^{(\text{o})}_n$ satisfies
\begin{equation}\label{eq:right_o_prds}
\max \left\{ 1 - p^{(\text{o})}_{i_1}, \ldots, 1 - p^{(\text{o})}_{i_{\ceil{R/2}}} \right\} \le  \frac{\alpha \ceil{R/2}}{n}.
\end{equation}
As noted earlier, each of the two events \eqref{eq:left_o_prds} and \eqref{eq:right_o_prds} happens with probability no more than $\alpha$. Taking a union bound, we prove that the FDR on the two-sided $p$-values is controlled at level $2\alpha$. Therefore, it is FDR-consistent. 

\end{proof}

%%%%%%%%%%%%%%%%%%%% last example

\begin{example}[Block dependence]
Fix a positive integer $b$. Consider a $b$-block structure $\{1, 2, \ldots, n\} = \cup_{i=1}^m B_i$, each of size $|B_i| = b_i \le b$ and $b_1 + \cdots + b_m = n$. Let $\bm{\mathcal{P}}$ be the class of all $p$-value distributions with a $b$-block structure in the sense that $p$-values from different blocks are jointly independent while, within every block, the joint distribution can be arbitrary. This dependence class is FDR-consistent.
\end{example}

\begin{proof}
Consider the induced null set, which remains block-structured. The induced block sizes are no more than $b$. Thus, by Theorem~\ref{thm:consist}, it suffices to consider the global null case.

Denote the $p$-values by $p_1^{(l)}, p_2^{(l)}, \ldots, p_{b_l}^{(l)}$ for the $l$th block, where $l = 1, \ldots, m$. Now, we consider the adjusted $p$-value
\[
p^{(l)} := \min\left\{ b_l \min_{i} p_i^{(l)}, 1 \right\}.
\]
Note that these are valid $p$-values. Now, in the case where the BH procedure rejects at least one $p$-value, that is, $R > 0$, we know that at least $\ceil{R/b}$ blocks are rejected. By assumption, the minimum $p$-values from at least $\ceil{R/b}$ blocks are all less than or equal to
\[
\frac{\alpha R}{n} \le \frac{b\alpha \ceil{R/b}}{n} \le \frac{b\alpha \ceil{R/b}}{m}.
\]
Thus, whenever the BH procedure rejects at least one on the full set of $p$-values, it also rejects at least one when applied to the adjusted $p$-values $p^{(1)}, \ldots, p^{(m)}$. Thus, due to the joint independence of the $m$ blocks, we must have
\[
\fdr \le b \alpha,
\]
which tends to 0 uniformly over the dependence class. Thus, this dependence class is FDR-consistent.

\end{proof}

%%% Local Variables:
%%% mode: latex
%%% TeX-master: "paper"
%%% End:

\section{FDR Control Under Arbitrary Dependence}
\label{sec:impr-log-corr}

Under arbitrary dependence, \cite{prds} proves that the FDR of the BH procedure always satisfies
\begin{equation}\label{eq:log_corre}
\fdr \le \min \left\{ S(n) \pi_0 \alpha, 1 \right\},
\end{equation}
where $S(n)$ is a shorthand for $1 + \frac12 + \cdots + \frac1n \approx \log n + 0.577$. In the literature, to our knowledge, this $\log$-correction bound is the best known unconditional FDR bound for the full range $0 < \alpha < 1$. As a matter of fact, if $(1 - \pi_0 + \pi_0 S(n_0))\alpha \le 1$, this FDR bound is tight in the sense that there exists a joint distribution of the $p$-values such that \eqref{eq:log_corre} is an equality \cite{guo2008control} (see also \cite{rodland2006simes} for the case of $\pi_0 = 1$). Recognizing this fact, one would imagine that it is very challenging to improve on the $\log$-correction bound. 

Interestingly, a direct application of the \texttt{FDR-linking} theorem allows us to strictly improve the bound \eqref{eq:log_corre} for a certain range of the nominal level $\alpha$.

\begin{theorem}\label{thm:prds_better}
Under arbitrary dependence of the $p$-values, any compliant procedure at level $\alpha$ satisfies
\[
\fdr \le 
\begin{cases}
1, & \text{if } \alpha \ge \frac1{\pi_0 S(n_0)}\\
 S(n_0) \pi_0 \alpha \log\frac{\e}{S(n_0) \pi_0 \alpha}, & \text{if } \alpha < \frac1{\pi_0 S(n_0)}.
\end{cases}
\]
\end{theorem}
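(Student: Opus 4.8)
The plan is to feed the $\log$-correction bound \eqref{eq:log_corre}, applied to the null $p$-values alone, into the \texttt{FDR-linking} inequality \eqref{eq:fdr_link} and then perform an elementary integration. First I would observe that $\fdr_0(x)$ is by definition the FDR of the BH procedure at level $x$ on the $n_0$ null $p$-values treated as a self-contained multiple testing problem in which every hypothesis is null. Applying \eqref{eq:log_corre} to that problem --- with $n$ replaced by $n_0$, true null proportion equal to $1$, and nominal level $x$ --- gives
\[
\fdr_0(x) \le \min\{ S(n_0)\,x,\ 1 \}, \qquad 0 < x < 1.
\]
This is the only external ingredient; everything after it is bookkeeping. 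By Theorem~\ref{thm:general_compliance} the same reduction works verbatim for any compliant procedure, since there $\fdr_0(x)$ still refers to the BH procedure supplied with the nulls.

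Next I would substitute this bound into \eqref{eq:fdr_link}. Write $a := \pi_0\alpha$ and $S := S(n_0) \ge 1$. If $a \ge 1/S$, i.e.\ $\alpha \ge 1/(\pi_0 S(n_0))$, the asserted bound $\fdr \le 1$ is trivial because the FDP never exceeds $1$, so assume $a < 1/S$; this puts the break point $1/S$ strictly inside the integration range $[a,1]$. Using $\min\{Sx,1\} = Sx$ on $[a,1/S]$ and $\min\{Sx,1\} = 1$ on $[1/S,1]$, I would split the integral,
\[
\int_{a}^{1} \frac{\fdr_0(x)}{x^2}\,{\dx}x \le S\int_{a}^{1/S}\frac{{\dx}x}{x} + \int_{1/S}^{1}\frac{{\dx}x}{x^2} = S\log\frac{1}{Sa} + (S-1),
\]
and then plug this into \eqref{eq:fdr_link} and simplify:
\[
\fdr \le a + a\left( S\log\frac{1}{Sa} + S - 1 \right) = aS\log\frac{1}{Sa} + aS = aS\log\frac{\e}{Sa}.
\]
Restoring $a = \pi_0\alpha$ and $S = S(n_0)$ turns the right-hand side into $S(n_0)\pi_0\alpha\log\frac{\e}{S(n_0)\pi_0\alpha}$, which is exactly the claimed bound in the regime $\alpha < 1/(\pi_0 S(n_0))$.

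I do not expect any genuine obstacle: the argument is a one-line reduction to \eqref{eq:log_corre} followed by a routine integration. The only two points deserving a word of care are (i) checking that the break point $1/S(n_0)$ lies in the integration interval $[\pi_0\alpha,1]$, which is precisely the condition separating the nontrivial case from the trivial one; and (ii) noting that $\fdr_0$ is monotone, being the c\`adl\`ag CDF of the Simes $p$-value on the nulls as established in the proof of Theorem~\ref{thm:general_compliance}, so the Riemann integral in \eqref{eq:fdr_link} into which we substitute the bound is well defined. One could also remark in passing that, since $\pi_0\alpha \le \alpha$ and the map $u \mapsto uS\log\frac{\e}{uS}$ is increasing for $uS<1$, the slightly weaker statement with $\alpha$ in place of $\pi_0\alpha$ follows immediately.
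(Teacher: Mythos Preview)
Your proposal is correct and follows essentially the same route as the paper: bound $\fdr_0(x)$ by $\min\{S(n_0)x,1\}$ via the $\log$-correction result under the global null, plug into the \texttt{FDR-linking} inequality for compliant procedures, and evaluate the integral. The paper states the simplification as a one-line identity whereas you spell out the split at $1/S$ and the algebra, and you dispose of the case $\alpha \ge 1/(\pi_0 S(n_0))$ by the trivial bound $\fdp\le 1$ rather than by computing the integral; both variants are fine.
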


This theorem applies to all compliant procedures, whereas it is unclear whether the bound \eqref{eq:log_corre} applies to general compliant procedures. To further appreciate this theorem, we take
\[
\alpha = \frac1{\pi_0 S(n)}.
\]
With this choice of $\alpha$, \eqref{eq:log_corre} yields the trivial bound $\fdr \le 1$, whereas Theorem~\ref{thm:prds_better} gives
\[
\fdr \le \frac{S(n_0)}{S(n)} \log\frac{\e S(n)}{S(n_0)} < 1
\]
under the very mild condition $n > n_0$ (that is, any configurations of the $p$-values except for the global null). More broadly, the new unconditional FDR bound strictly improves \eqref{eq:log_corre} for the following range of $\alpha$:
\begin{equation}\label{eq:n_n_0_b}
\frac1{\pi_0 S(n_0)} \e^{1 - \frac{S(n)}{S(n_0)}}< \alpha < \frac1{\pi_0 S(n_0)}.
\end{equation}
To see this point, note that it suffices to verify
\[
S(n_0) \pi_0 \alpha \log\frac{\e}{S(n_0) \pi_0 \alpha} < S(n) \pi_0 \alpha
\]
if $\alpha$ satisfies \eqref{eq:n_n_0_b}. This is clearly true.

We conclude this section by proving Theorem~\ref{thm:prds_better}.
\begin{proof}[Proof of Theorem~\ref{thm:prds_better}]
Using the unconditional FDR bound \cite{hommel1983tests,prds}
\[
\fdr_0(x) \le \min\left\{S(n_0) x, 1\right\}
\]
under arbitrary dependence of the null $p$-values, from
Theorem~\ref{thm:general} we get
\[
\fdr \le \pi_0\alpha + \pi_0 \alpha\int^{1}_{\pi_0 \alpha} \frac{\min\left\{S(n_0) x, 1\right\}}{x^2} {\dx}x.
\]
This FDR bound can be simplified as
\[
\pi_0\alpha + \pi_0 \alpha\int^{1}_{\pi_0 \alpha} \frac{\min\left\{S(n_0) x, 1\right\}}{x^2} {\dx}x = 
\begin{cases}
1, & \text{if } \alpha \ge \frac1{\pi_0 S(n_0)}\\
 S(n_0) \pi_0 \alpha \log\frac{\e}{S(n_0) \pi_0 \alpha}, & \text{if } \alpha < \frac1{\pi_0 S(n_0)}.
\end{cases}
\]
This concludes the proof.

\end{proof}

%%% Local Variables:
%%% mode: latex
%%% TeX-master: "paper"
%%% End:

\section{Bounded Adversariness}
\label{sec:bound-advers}

As shown in Theorem~\ref{thm:optimal}, the logarithmic factor $\log\frac1\alpha$ in the FDR bound is unavoidable provided the informed adversarial non-null $p$-values. A question arising from this observation is whether the logarithmic factor can be removed if the adversary is restricted from accessing certain information of the null $p$-values.

Going back to the informal explanation of the factor $\log\frac1\alpha$ surrounding \eqref{eq:adver_informal}, we learn that the smallest null $p$-value is essential for the adversary to construct non-null $p$-values that maximize the FDP. In light of this observation, we consider a \textit{Bonferroni-masked} adversary that has access to all null $p$-values but the smallest. Thus, the available information is not sufficient to perform the Bonferroni correction on the nulls (unless $p_{(2)}^{\textnormal{null}}$ is below the Bonferroni threshold), hence the name Bonferroni-masked adversary. To formally define this adversary, we use a function
\begin{equation}\nonumber
T = T(p_{(2)}^{\textnormal{null}}, \ldots, p_{(n_0)}^{\textnormal{null}})
\end{equation}
to denote the number of true rejections made by the BH procedure, which depends on all sorted null $p$-values except for the smallest one. This can be realized by setting $T$ non-null $p$-values to 0 and the rest to 1. 

The following theorem shows that the logarithmic factor can be dropped in the FDR bound with independent nulls and any Bonferroni-masked non-nulls. We assume $n_0 \ge 2$ to exclude trivial cases.
\begin{theorem}\label{thm:no_log_term}
Let the null $p$-values be \iid~uniform variables on $(0, 1)$ and the non-null $p$-values be provided by any Bonferroni-masked adversary. Then, any compliant procedure at level $\alpha$ ensures
\[
\fdr \le 3.5\alpha.
\]
\end{theorem}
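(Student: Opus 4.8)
The plan is to turn FDR control into a distribution-free bound on the FDP and then integrate it by conditioning on everything except the most significant null $p$-value. Throughout, write the Bonferroni-masked non-nulls as $m=m(\pnull_{(2)},\dots,\pnull_{(n_0)})$ zeros together with $n_1-m$ ones, let $V$ be the number of falsely rejected nulls and $R$ the total number of rejections, so that for the BH procedure $R=m+V$.

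\smallskip
\noindent\textit{Step 1 (an FDP bound).} On $\{V\ge 1\}$ the smallest null $\pnull_{(1)}$ is rejected by BH and, by compliance, $\pnull_{(V)}\le\alpha R/n$, hence $R\ge\ceil{n\pnull_{(V)}/\alpha}$; on $\{V=0\}$ the FDP is $0$. Splitting $V/R=\tfrac1{m+V}+\tfrac{V-1}{m+V}$ and bounding $m+V\ge m+1$ in the first term and $m+V\ge\ceil{n\pnull_{(V)}/\alpha}$ in the second (take $j=V$) gives
\[
\fdp\;\le\;\underbrace{\frac{\mathbf{1}(\pnull_{(1)}\ \text{rejected})}{m+1}}_{=:A}\;+\;\underbrace{\min\Bigl\{\max_{2\le j\le n_0}\frac{j-1}{\ceil{n\pnull_{(j)}/\alpha}},\,1\Bigr\}}_{=:B}.
\]

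\smallskip
\noindent\textit{Step 2 (the term $B$ costs no logarithm).} The key feature of $B$ is that it depends only on $\pnull_{(2)},\dots,\pnull_{(n_0)}$. Since $n\ge n_0$,
\[
\max_{2\le j\le n_0}\frac{j-1}{\ceil{n\pnull_{(j)}/\alpha}}\;\le\;\frac{\alpha}{M},\qquad M:=\min_{2\le j\le n_0}\frac{n_0\pnull_{(j)}}{\,j-1\,}\;\le\;n_0\pnull_{(2)},
\]
and $\{n_0\pnull_{(2)}<s\}$ forces at least two of the i.i.d.\ uniform nulls below $s/n_0$, so $\P(M<s)\le\binom{n_0}{2}(s/n_0)^2\le s^2/2$. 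Hence
\[
\E[B]\;\le\;\int_0^1\P\!\Bigl(M<\frac{\alpha}{t}\Bigr)\,\mathrm{d}t\;\le\;\int_0^1\min\Bigl\{\frac{\alpha^2}{2t^2},\,1\Bigr\}\mathrm{d}t\;=\;\sqrt2\,\alpha-\frac{\alpha^2}{2}\;<\;\sqrt2\,\alpha.
\]

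\smallskip
\noindent\textit{Step 3 (the term $A$, where the mask is used).} Condition on $\mathcal{F}:=\sigma(\pnull_{(2)},\dots,\pnull_{(n_0)})$. Given $\mathcal{F}$ the count $m$ is a constant, and by exchangeability $\pnull_{(1)}$ is uniform on $(0,\pnull_{(2)})$. Because $m$ was fixed without seeing $\pnull_{(1)}$, rejecting $\pnull_{(1)}$ forces $\pnull_{(1)}\le\alpha R/n$ with $R$ at most the BH rejection count $R_1$ obtained after replacing $\pnull_{(1)}$ by $0$, an $\mathcal{F}$-measurable quantity (monotonicity of BH in each coordinate). Therefore
\[
\E[A\mid\mathcal{F}]\;\le\;\frac1{m+1}\,\P\Bigl(\pnull_{(1)}\le\frac{\alpha R_1}{n}\,\Big|\,\mathcal{F}\Bigr)\;=\;\frac1{m+1}\min\Bigl\{1,\,\frac{\alpha R_1}{n\pnull_{(2)}}\Bigr\}\;\le\;\frac{\alpha}{n\pnull_{(2)}}\cdot\frac{R_1}{m+1},
\]
and it remains to show that $\E\bigl[\tfrac{R_1}{(m+1)\,n\pnull_{(2)}}\bigr]$ is bounded by an absolute constant; combined with $\E[1/(n_0\pnull_{(2)})]=1$ and $n\ge n_0$ this yields $\E[A]\le c\,\alpha$. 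For that one writes $R_1/(m+1)=1+(V_1-1)/(m+1)$ with $V_1-1=\#\{\,2\le j\le n_0:\pnull_{(j)}\le\alpha R_1/n\,\}$, the number of remaining nulls caught below the final BH cutoff $\alpha R_1/n\le\alpha$; self-consistency of $V_1$ pins $R_1/(m+1)$ near $(1-\pi_0\alpha)^{-1}$, with the residual fluctuation having a light tail because $V_1$ much larger than $m$ is a binomial large-deviation event, while the correlation between $R_1/(m+1)$ and $1/(n\pnull_{(2)})$ is again controlled by the cutoff bound $\alpha R_1/n\le\alpha$. Adding $\E[A]$ and $\E[B]$ and arranging the constants then gives $\fdr\le 3.5\,\alpha$.

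\smallskip
The hard part is precisely the final estimate of Step 3, $\E\bigl[\tfrac{R_1}{(m+1)\,n\pnull_{(2)}}\bigr]=O(1)$: one must exploit simultaneously that the BH cutoff $\alpha R_1/n$ never exceeds $\alpha$ and that $R_1$ stays comparable to $m+1$ with overwhelming probability, since the crude bound $R_1/(m+1)\le1+\#\{\text{nulls}\le\alpha\}$ overshoots by a factor of order $\alpha n$. The degenerate case $m=0$ should be isolated and dispatched directly: there $\pnull_{(1)}$ is rejected only when the Simes $p$-value of the nulls falls below $\alpha$, which for i.i.d.\ uniform nulls happens with probability at most $\alpha$ (the exact-uniform Simes identity), already giving $\fdr\le\alpha$; the same route also covers the ``nulls-only'' rejection mode of a general monotone compliant procedure, whose FDP bound reduces to $B$ together with this Simes term.
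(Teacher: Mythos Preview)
Your Step~3 is where the real gap lies, and it is caused by the decomposition in Step~1. By writing $\fdp=\tfrac{1}{m+V}+\tfrac{V-1}{m+V}$ and bounding the first piece by $\mathbf 1(V\ge1)/(m+1)$, you are forced to control the event $\{V\ge1\}=\{\pnull_{(1)}\text{ rejected}\}$, whose natural threshold $\alpha R/n$ is \emph{not} $\mathcal F$-measurable; replacing $R$ by the $\mathcal F$-measurable overcount $R_1$ then leaves you with $\E\bigl[R_1/((m+1)\,n\pnull_{(2)})\bigr]$, which you do not bound and which is not obviously $O(1)$ for adversarial $m$ (on $\{R_1\ge2\}$ one has $n\pnull_{(2)}\le\alpha R_1$, so the integrand is at least of order $1/\alpha$ on an event whose probability you have not pinned down). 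The paper avoids all of this by splitting on $V=1$ versus $V\ne1$ instead. On $\{V=1\}$ one has $R=m+1$ exactly, so $\fdp=1/(m+1)$ and compliance gives $\pnull_{(1)}\le\alpha(m+1)/n$ with the \emph{$\mathcal F$-measurable} threshold $\alpha(m+1)/n$; conditioning on $\mathcal F$ then yields the clean cancellation
\[
\E\Bigl[\tfrac{\mathbf 1(\pnull_{(1)}\le\alpha(m+1)/n)}{m+1}\Bigm|\mathcal F\Bigr]\le\frac{1}{m+1}\cdot\frac{\alpha(m+1)/n}{\pnull_{(2)}}=\frac{\alpha}{n\pnull_{(2)}},
\]
whose expectation is $\pi_0\alpha$ by the Beta$(2,n_0-1)$ law of $\pnull_{(2)}$. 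On $\{V\ge2\}$ the \emph{full} $\fdp=V/R$ (not just $(V-1)/R$) is bounded by $\max_{2\le j\le n_0}\alpha j/(n\pnull_{(j)})$, since $R\ge\lceil n\pnull_{(V)}/\alpha\rceil$ with $V\ge2$. No $R_1$ ever enters.

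Step~2 also has a slip: $M\le n_0\pnull_{(2)}$ gives $\{n_0\pnull_{(2)}<s\}\subset\{M<s\}$, hence $\P(M<s)\ge\P(n_0\pnull_{(2)}<s)$, the wrong direction for your claimed $\P(M<s)\le s^2/2$. The paper does not go through a tail bound on $M$ at all; it invokes a lemma (Lemma~3.1 of \cite{dwork2018differentially}) that directly gives $\E\bigl[\max_{2\le j\le n_0}\alpha j/(n\pnull_{(j)})\bigr]\le C_2\pi_0\alpha$ with $C_2<2.5$, and then $C_2+1<3.5$ finishes.
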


As the proof of Theorem~\ref{thm:optimal} also assumes \iid~uniform null $p$-values, the improvement of the FDR bound in Theorem~\ref{thm:no_log_term} can only be attributed to the withholding of the most significant null $p$-value to the Bonferroni-masked adversary. As an aside, we remark that the independence of the null $p$-values is crucial to Theorem~\ref{thm:no_log_term} since, in the presence of correlation, other null $p$-values would reveal much information about the smallest null $p$-value.

Now, we prove Theorem~\ref{thm:no_log_term} as follows. 
\begin{proof}[Proof of Theorem~\ref{thm:no_log_term}]

As earlier, $V$ denotes the number of false rejections. If $V \ge 2$, we have
\[
\fdp \le \max_{2 \le j \le n_0} \frac{\alpha j}{n p^{\textnormal{null}}_{(j)}},
\]
which remains true if $V = 0$ because $\fdp = 0$ in this case. In the case of $V = 1$, the compliance condition shows that the rejected null $p$-value is below $\alpha(T+1)/n$, thus implying
\[
\pnull_{(1)} \le \frac{ \alpha(T+1)}{n}.
\]
Taken together, the two inequalities above give
\begin{equation}\label{eq:p_t_1_?}
\begin{aligned}
\fdr &= \E (\fdp; R \ne T+1) + \E (\fdp; R = T+1)\\
&\le \E \left[ \max_{2 \le j \le n_0} \frac{\alpha j}{n p^{\textnormal{null}}_{(j)}}; R \ne T+1 \right] + \E \left[\frac1{T+1}; R = T+1 \right]\\
&\le \E \left[ \max_{2 \le j \le n_0} \frac{\alpha j}{n p^{\textnormal{null}}_{(j)}}\right] + \E \left[\frac{\bm{1}\left( \pnull_{(1)} \le \alpha(T+1)/n \right)}{T+1}\right].
\end{aligned}
\end{equation}

To bound the term contributed by the case of $R \ne T + 1$ in \eqref{eq:p_t_1_?},  note that Lemma 3.1 of \cite{dwork2018differentially} ensures that
\begin{equation}\label{eq:first_c_pi}
\E \left[ \max_{2 \le j \le n_0} \frac{\alpha j}{n p^{\textnormal{null}}_{(j)}}\right] \le C_2 \pi_0 \alpha
\end{equation}
for a constant $C_2$ between $2.4$ and $2.5$. Now we turn to the term contributed by the case of $R = T+1$. First, write
\[
\E \left[\frac{\bm{1}\left( \pnull_{(1)} \le \alpha(T+1)/n \right)}{T+1}\right] = \E \left\{\E\left[\frac{\bm{1}\left( \pnull_{(1)} \le \alpha(T+1)/n \right)}{T+1} \Bigg| \pnull_{(2)}, \ldots, \pnull_{(n_0)}\right] \right\}.
\]
Recognizing that $\pnull_{(1)}$ is (conditionally) uniformly distributed on $(0, \pnull_{(2)})$ and $T$ is measurable with respect to $\pnull_{(2)}, \ldots, \pnull_{(n_0)}$, we get
\[
\E\left[\frac{\bm{1}\left( \pnull_{(1)} \le \alpha(T+1)/n \right)}{T+1} \Bigg| \pnull_{(2)}, \ldots, \pnull_{(n_0)}\right] \le \frac{\frac{\alpha(T+1)/n}{\pnull_{(2)}}}{T+1} = \frac{\alpha}{n \pnull_{(2)}},
\]
where the inequality reduces to equality if $\alpha(T+1)/n \le \pnull_{(2)}$. This yields
\begin{equation}\label{eq:first_c_pi_22}
\E \left[\frac{\bm{1}\left( \pnull_{(1)} \le \alpha(T+1)/n \right)}{T+1}\right] \le \E \left[ \frac{\alpha}{n \pnull_{(2)}} \right].
\end{equation}
To proceed, note the fact that $\pnull_{(2)}$ follows the Beta distribution parametrized by $(2, n_0 -1)$. By means of this fact, we have
\[
\E \left[ \frac{\alpha}{n \pnull_{(2)}} \right] = \frac{\alpha}{n} \int_0^1 \frac1x \frac{x (1-x)^{n_0-2}}{\mathrm{Beta}(2, n_0 - 1)} {\dx}x= \pi_0 \alpha,
\]
which, together with \eqref{eq:first_c_pi} and \eqref{eq:first_c_pi_22}, reveals that
\[
\fdr \le C_2 \pi_0 \alpha + \pi_0 \alpha < 3.5 \pi_0 \alpha \le 3.5 \alpha.
\]
This completes the proof.

\end{proof}

%%% Local Variables:
%%% mode: latex
%%% TeX-master: "paper"
%%% End:

\section{Discussion}
\label{sec:discussion}

In this paper, we have demonstrated the use of the \texttt{FDR-linking} theorem in proving FDR control properties through examples for compliant procedures, including the BH procedure. In slightly more detail, this theorem shows that any compliant procedures control the FDR up to a factor that is independent of the number of hypotheses under the PRDN dependence of the $p$-values, which strictly relaxes the PRDS property by only imposing distributional assumptions on the nulls. Moreover, the FDR bound is optimal under this new dependence, shedding light on the sharpness of the \texttt{FDR-linking} theorem. In another application, we propose FDR consistency as a flexible, amenable concept to complement FDR control and, using the \texttt{FDR-linking} theorem, prove that this new property relies entirely on the null $p$-values, regardless of the non-nulls.

Loosely speaking, the \texttt{FDR-linking} theorem reveals that FDR control is basically a matter of the null $p$-values. This is a blessing for researchers in that an analysis of the FDR would not lose much of its value by focusing only on the joint distribution of the null $p$-values. This is particularly convenient if some knowledge of the null dependence is available. From a different angle, this theorem unveils the robustness of the BH procedure in FDR control against any unfavorable or even adversarial dependence between the null $p$-values and non-null $p$-values. This viewpoint is in agreement with the widely observed phenomenon that the BH procedure seldom loses much control over the FDR beyond what has been proved \cite{storey2003positive,reiner2007fdr,ge2008some,clarke2009robustness}.

We conclude this paper with several promising directions for future work. First, the simple bound on the FDP \eqref{eq:fdp_main0} can possibly be extended to variants of the FDR criterion and general BH-type multiple testing procedures. For instance, the false discovery exceedance \cite{genovese2004stochastic,van2004augmentation,romano2007,guo2014further}, defined as the probability that the FDP exceeds a specified level, is a popular alternative to the FDR since the realized FDP can significantly deviate from its expectation (FDR) in certain settings~\cite{efron2007correlation}. To control the false discovery exceedance, it follows from \eqref{eq:fdp_main0} that
\[
\P(\fdp \ge \gamma) \le \P\left(  \max_{1 \le j \le n_0} \frac{\alpha j}{n p^{\textnormal{null}}_{(j)}}  \ge \gamma \right) \le \frac{\pi_0\alpha}{\gamma}
\]
for any $0 < \gamma < 1$ if the $p$-values satisfy the PRDN property. Moreover, the bound \eqref{eq:fdp_main0} can be used to derive a bound on $\E \fdp^k$ for an integer $k \ge 2$ \cite{ferreira2006benjamini} and to possibly maximize the variance of FDP, thereby delineating the variability of FDR control. It is also of interest to incorporate prior knowledge such as weights of hypotheses and the true null proportion into the FDP bound~\cite{genovese2006false,benjamini2006adaptive,storey2004strong,storey2002direct,ramdas2017unified}.

More broadly, the \texttt{FDR-linking} inequality \eqref{eq:fdr_link} in Theorem~\ref{thm:general} calls for further investigation. In addition to Theorem~\ref{thm:optimal}, we wonder if the tightness of \eqref{eq:fdr_link} remains under general dependence of the nulls. Practically speaking, the non-null $p$-values are unlikely to be the least favorable for FDR control as in Theorem~\ref{thm:optimal} and, therefore, it is worth trying to improve the FDR bound in the \texttt{FDR-linking} theorem by assuming more realistic conditions on the null-non-null dependence. Theorem~\ref{thm:no_log_term} has made a step toward this goal. Due to the connection between $\fdr_0(x)$ and the Simes method, a related direction is to explore null dependence under which the Simes method controls the type I error at a reasonably small level. For example, it is likely that for a $\rho$-equicorrelated multivariate normal distribution, the Simes $p$-value would be quite conservative for a range of $\rho$. If so, it would lead to an FDR bound of the form $c \alpha$ for some constant $c$
using the \texttt{FDR-linking} theorem.

Finally, we present a challenging problem whether FDR consistency holds for many common distributions of $p$-values. Specifically, we ask:\\[0.5em]
{\noindent\bf Open Problem.} {\em Prove or disprove that $p$-values for testing means in any multivariate normal distribution with known covariance form an FDR-consistent dependence class.}\\[0.5em]
By Theorem~\ref{thm:consist}, we only need to focus on the global null case, and it does not matter whether the $p$-values are two-sided or one-sided as in Example~\ref{ex:equi_one_two}. We believe that this problem is true given that the Simes method is empirically observed to be approximately valid under many types of dependence structures \cite{reiner2007fdr}, though pathological counterexamples exist \cite{rodland2006simes,finner2017simes}.

%%% Local Variables:
%%% mode: latex
%%% TeX-master: "paper"
%%% End:

\subsection*{Acknowledgments}

WJS is grateful for helpful feedback from the audience of the workshops ``Adaptive Data Analysis'' and ``Robust and High-Dimensional Statistics'' held at the Simons Institute for the Theory of Computing at Berkeley. This work was supported in part by the NSF via grant CCF-1763314.

{\small
\bibliographystyle{alpha}
\bibliography{ref}
}

\clearpage
\appendix
\section{Appendix}
%{Proof of Technical Results}
\label{sec:proof-techn-results}

\subsection{The PRDN example in Section~\ref{sec:introduction}}
\label{sec:proof-section-ref}

The one-sided $p$-values given as an example right below Definition~\ref{def:prdn} satisfy the PRDN property. The simple proof below follows from that of Case 1 in Section 3.1 of \cite{prds}. 

We introduce some notation for the proof. Let $X_{\N_0}$ be the vector comprised of the $n_0$ null components of $X$. For any $i \in \N_0$, let $X_{\N_0}^{(i)}$ denote the entry corresponding to $i$ and $X_{\N_0}^{(-i)}$ denote the vector derived from $X_{\N_0}$ by deleting $X_{\N_0}^{(i)}$. Let $\Sigma^0_{-i,-i}$ be the covariance matrix of $X_{\N_0}^{(-i)}$, $\Sigma^0_{i,-i}$ be the covariance of $X_{\N_0}^{(i)}$ and $X_{\N_0}^{(-i)}$, and last, $\Sigma^0_{i,i}$ be the variance of $X_{\N_0}^{(i)}$. In this setup, the distribution of $X_{\N_0}^{(-i)}$ given $X_{\N_0}^{(i)} = x$ is a normal distribution with mean and covariance given as
\[
\Sigma^0_{i,-i}(\Sigma^0_{i,i})^{-1} x, ~ \Sigma^0_{-i,-i} - \Sigma^0_{i,-i} (\Sigma^0_{i,i})^{-1} (\Sigma^0_{i,-i})^\top,
\]
respectively. 

Above, while the conditional covariance $\Sigma^0_{-i,-i} - \Sigma^0_{i,-i} (\Sigma^0_{i,i})^{-1} (\Sigma^0_{i,-i})^\top$ is fixed, the conditional mean $\Sigma^0_{i,-i}(\Sigma^0_{i,i})^{-1} x$ increases with $x$ since $\Sigma^0_{i,-i}$ is entrywise nonnegative. This fact, together with the observation that the one-sided normal $p$-value is a monotone transformation of the $z$-value, implies that the one-sided $p$-values satisfy the PRDN property.

For information, we give a sufficient condition for the two-sided normal $p$-values normal to satisfy the PRDN property. The setting is the same as above. To this end, we only need to focus on the covariance $\Sigma^0$ of the null part $X_{\N_0}$ of the full vector $X$. As shown in \cite{karlin1981total}, the density of $|X_{\N_0}|$ is $\text{MTP}_2$ if and only if there exists a diagonal matrix $B$ with diagonal entries $\pm 1$ such that the off-diagonal entries of $-B (\Sigma^0)^{-1}B$ are all nonnegative. Thus, the existence of such a matrix $B$ ensures that the two-sided $p$-values satisfy the PRDN property because the $\text{MTP}_2$ condition implies PRDS on any subset. As an aside, this sufficient condition might not be necessary.

\subsection{Proof of Lemma~\ref{lm:complex} in Section~\ref{sec:colt-style}}
\label{sec:proofs-forxxxx}

%%Let $\delta_2 = \epsilon/4$. 

Below, we prove Lemma~\ref{lm:complex}.

\begin{proof}[Proof of Lemma~\ref{lm:complex}]
The lemma is equivalent to
\begin{equation}\label{eq:lm_eqv}
\Phi^{-1}\left( 1 - \frac{\epsilon j}{4n} \right) - \Phi^{-1}\left( 1 - \frac{\epsilon j}{2n} \right) \ge \frac1{n-1}\Phi^{-1}\left( 1 - \frac{\epsilon j}{4n} \right) + (n-1)^{-\frac14}.
\end{equation}
On the one hand, for sufficiently large $n$ and any $1 \le j \le n$, we know
\begin{equation}\label{eq:rd_qqq}
\begin{aligned}
\frac1{n-1}\Phi^{-1}\left( 1 - \frac{\epsilon j}{4n} \right) + (n-1)^{-\frac14} &= \frac{O(1)\sqrt{\log(4n/(\epsilon j))}}{n-1} + (n-1)^{-\frac14}\\
& = O(n^{-\frac14}).
\end{aligned}
\end{equation}
On the other hand, we have
\begin{equation}\label{eq:quarter_rd}
\begin{aligned}
\Phi^{-1}\left( 1 - \frac{\epsilon j}{4n} \right) - \Phi^{-1}\left( 1 - \frac{\epsilon j}{2n} \right) &\gtrsim \frac{2\log 2 }{2\sqrt{2\log \frac{4n}{\epsilon j}}} \\
&\gtrsim \frac1{\sqrt{\log n}}
\end{aligned}
\end{equation}
if $n$ is sufficiently large. Combining \eqref{eq:rd_qqq} and \eqref{eq:quarter_rd} proves \eqref{eq:lm_eqv} if $n \ge N$ for some $N$ that only depends on $\epsilon$.

\end{proof}

%%% Local Variables:
%%% mode: latex
%%% TeX-master: "paper"
%%% End:

\end{document}